\newtheorem{thm}{Theorem}
\newtheorem{lem}[thm]{Lemma}
\newtheorem{prop}[thm]{Proposition}
\theoremstyle{definition}
\newtheorem{exam}[thm]{Example}
\theoremstyle{remark}
\numberwithin{equation}{section}
\begin{document}

\title[Ultraparacompactness and Ultranormality]{Ultraparacompactness and Ultranormality}
\author{Joseph Van Name}
\address{}
\email{jvanname@mail.usf.edu}

\subjclass{}%
\keywords{}%

%\date{}
%\dedicatory{}
%\commby{}%
% ----------------------------------------------------------------
\begin{abstract}
In this note, we shall overview some results related to ultraparacompactness and ultranormality in the general topological
and point-free contexts. This note contains some standard results and counterexamples along with some results which are not that well known
and even of my new results.
\end{abstract}
\maketitle
% ----------------------------------------------------------------
\section{Ultraparacompactness and Ultranormality}
In this note, we shall give an overview of the notions of ultranormality and ultraparacompactness. Some of the results in this paper are standard
and well known, while some other results are not very well known and some are a part of my research.

A Hausdorff space $X$ is said to be \emph{ultraparacompact} if every open cover can be refined by a partition into clopen sets. A Hausdorff space is \emph{ultranormal} if and only if whenever $R,S$ are disjoint closed sets, there is a clopen set $C$ with $R\subseteq C$ and $S\subseteq C^{c}$. 
Clearly, the ultranormal spaces are precisely  the spaces with large inductive dimension zero. A Hausdorff space is said to be \emph{zero-dimensional} if it has a basis of clopen sets. Clearly every ultraparacompact space is paracompact, and every ultranormal space is normal. It is easy to see that every ultraparacompact space is ultranormal and every ultranormal space is zero-dimensional. However, we shall soon see that the converses fail to hold. 

Intuitively, ultraparacompactness, ultranormality, and zero-dimensionality are the zero-dimensional analogues of the notions of paracompactness, normality, and regularity. Many in the notions and results from general topology have analogous zero-dimensional notions and results. 
For example, as every compact regular space can be embedded in some cube $[0,1]^{I}$, every compact zero-dimensional space can be embedded into some zero-dimensional cube $\{0,1\}^{I}$. As every complete uniform space of non-measurable cardinality is realcompact, every complete uniform space of non-measurable cardinality generated by equivalence relations is $\mathbb{N}$-compact. Moreover, notions such as uniform spaces, proximity spaces, the Stone-Cech compactification, realcompactness, and the Hewitt realcompactification are analogous to the notions of non-Archimedean uniform spaces, zero-dimensional proximity spaces, the Banaschewski compactification, $\mathbb{N}$-compactness, and the $\mathbb{N}$-compactification. Many of the results in this paper are analogous to results that do not necessarily involve zero-dimensional spaces.

Often it is easier to produce results about zero-dimensional spaces, and the theory of zero-dimensional spaces is generally more elegant
than for topological spaces in general. This is because zero-dimensional spaces have enough clopen sets and clopen sets satisfy nice properties since they form a Boolean algebra. For example, the Banaschewski compactification of a zero-dimensional space $X$ is simply the set of all ultrafilters on the Boolean algebra of clopen subsets of $X$. On the other hand, to define the Stone-Cech compactification of a completely regular space $X$ you use ultrafilters on the lattice of zero sets of $X$. Clearly ultrafilters on Boolean algebras are nicer than ultrafilters on lattices.

% ----------------------------------------------------------------
\section{Results}
We shall start with some counterexamples that show that ultranormality and ultraparacompactness are distinct notions.
\begin{prop}
Give the space $\mathbb{R}$ the lower limit topology (that is the topology generated by the basis $\{[a,b)|a<b\}$. Then
$\mathbb{R}$ is an ultraparacompact space with this topology.
\end{prop}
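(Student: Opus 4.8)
The plan is to show directly that every open cover of the Sorgenfrey line $\mathbb{R}$ (with the lower limit topology) admits a refinement that is a partition into clopen sets. The key structural fact I would exploit is that in this topology each basic set $[a,b)$ is clopen: it is open by definition, and its complement $(-\infty,a)\cup[b,\infty)$ is a union of basic open sets, hence open. So I have an abundant supply of clopen ``half-open intervals,'' and the whole task reduces to chopping $\mathbb{R}$ into such intervals in a way that refines a given cover and keeps the pieces pairwise disjoint.

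\medskip

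First I would fix an open cover $\mathcal{U}$ and, for each point $x\in\mathbb{R}$, use openness to choose a basic set $[x,r_x)\subseteq U$ for some $U\in\mathcal{U}$. This gives a cover of $\mathbb{R}$ by right-half-open intervals each sitting inside a single cover element; the remaining work is purely combinatorial. The natural approach is to handle one ``unit'' interval at a time: fix an integer $n$ and refine the restriction of the cover to $[n,n+1)$, then take the union over all $n\in\mathbb{Z}$. Since the intervals $[n,n+1)$ are themselves clopen and partition $\mathbb{R}$, a clopen partition of each piece assembles into a clopen partition of the whole line, and refinement is inherited piecewise. This localizes everything to a half-open interval of finite length.

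\medskip

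The core construction on $[n,n+1)$ is a transfinite or greedy ``sweeping'' argument. Working left to right, I would define a (possibly transfinite) increasing sequence of breakpoints starting at $n$: at each stage, from the current left endpoint $x$ pick a chosen basic interval $[x,r_x)$ lying in some cover element, carve off $[x,r_x)$ as one clopen block of the partition, and continue from $r_x$. Because each block is a right-half-open interval contained in a single $U\in\mathcal{U}$, the collection of blocks refines $\mathcal{U}$ and consists of pairwise disjoint clopen sets by construction. I expect the main obstacle to be showing that this sweep actually exhausts $[n,n+1)$ rather than stalling at some limit point where the accumulated breakpoints have a supremum $s<n+1$ not yet covered; one resolves this by noting that $s$ is again a point with its own chosen basic interval $[s,r_s)$, so the process restarts at $s$ and pushes strictly past it, and a cardinality/well-ordering argument (or an appeal to the structure of the reals) guarantees termination at $n+1$. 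Verifying that this patched-together family is genuinely a disjoint clopen partition covering all of $[n,n+1)$ is the one step requiring care; everything else is routine bookkeeping.
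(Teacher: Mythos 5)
Your proposal is correct and follows essentially the same route as the paper: reduce to clopen pieces (your $[n,n+1)$ versus the paper's copies of $[0,\infty)$) and then run a transfinite left-to-right sweep carving off half-open intervals $[x,r_x)$ each contained in a single cover element, restarting at suprema at limit stages. The termination and disjointness bookkeeping you flag is exactly what the paper's construction of the sequence $(x_\alpha)$ handles, so no new idea is needed.
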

\begin{proof}
We shall show that the interval $[0,\infty)$ is ultraparacompact. The fact that $\mathbb{R}$ is ultraparacompact will then follow since
$\mathbb{R}$ is homeomorphic to the sum of countably many copies of the interval $[0,\infty)$. In fact, the space $\mathbb{R}$ is homeomorphic
to $[0,\infty)$ itself. Let $\mathcal{U}$ be an open cover of $[0,\infty)$. Let $x_{0}=0$. If $\alpha$ is an ordinal and $\sup\{x_{\beta}|\beta<\alpha\}<\infty$, then let $x_{\alpha}$ be a real number such that $x_{\alpha}>\sup\{x_{\beta}|\beta<\alpha\}$ and $[\sup\{x_{\beta}|\beta<\alpha\},x_{\alpha})\subseteq U$ for some $U\in\mathcal{U}$. Then $\{[x_{\alpha},x_{\alpha+1})|\alpha\}$ is a partition of $[0,\infty)$ into clopen sets that refines $\mathcal{U}$. Thus, $[0,\infty)$ is ultraparacompact.
\end{proof}
To the contrary, it is well known that the product $\mathbb{R}\times\mathbb{R}$ where $\mathbb{R}$ is given the lower limit topology is not even normal. Therefore the product of ultraparacompact spaces may not even be normal.

\begin{lem}
Let $f:\omega_{1}\rightarrow\omega_{1}$ be a function such that $f(\alpha)<\alpha$ for each $\alpha<\omega_{1}$. Then there is some
$\beta<\omega_{1}$ such that $f(\alpha)=\beta$ for uncountably many $\beta<\omega_{1}$.
\end{lem}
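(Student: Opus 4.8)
The plan is to argue by contradiction, exploiting the regularity of $\omega_1$ together with the regressive hypothesis $f(\alpha)<\alpha$. (Note that the statement is intended to assert that $f(\alpha)=\beta$ for uncountably many $\alpha$, i.e.\ that some fiber $f^{-1}(\{\beta\})$ is uncountable.) So I would assume, for contradiction, that every fiber $f^{-1}(\{\beta\})=\{\alpha<\omega_1 : f(\alpha)=\beta\}$ is countable, and then produce an impossible ordinal. The first thing to observe is that a naive cardinality count is doomed: writing $\omega_1=\bigcup_{\beta<\omega_1} f^{-1}(\{\beta\})$ merely exhibits $\omega_1$ as a union of $\omega_1$-many countable sets, which is perfectly consistent in size. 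Hence the whole force of the argument must come from regressivity, not from counting alone.

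The key preliminary step is to note that, under the assumption, for every $\gamma<\omega_1$ the ``lower'' set $A_\gamma:=\{\alpha<\omega_1 : f(\alpha)<\gamma\}=\bigcup_{\beta<\gamma} f^{-1}(\{\beta\})$ is a countable union of countable sets, hence countable; in particular $\sup A_\gamma<\omega_1$. With this in hand I would run a closing-off construction: set $\gamma_0=1$, and given $\gamma_n$ choose $\gamma_{n+1}<\omega_1$ with $\gamma_{n+1}>\gamma_n$ and $\gamma_{n+1}>\sup A_{\gamma_n}$, so that every $\alpha$ with $f(\alpha)<\gamma_n$ already satisfies $\alpha<\gamma_{n+1}$. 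Because each $A_{\gamma_n}$ is countable this choice is always possible, and the sequence stays below $\omega_1$.

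Finally I would set $\gamma=\sup_n\gamma_n$, which is a countable limit ordinal since it is the supremum of a countable increasing sequence of countable ordinals. Applying regressivity at $\gamma$ gives $f(\gamma)<\gamma=\sup_n\gamma_n$, so $f(\gamma)<\gamma_n$ for some $n$, i.e.\ $\gamma\in A_{\gamma_n}$; but then the construction forces $\gamma\le\sup A_{\gamma_n}<\gamma_{n+1}\le\gamma$, which is absurd. This contradiction shows that some fiber must be uncountable. The main obstacle I anticipate is not any single calculation but getting the bookkeeping of the construction exactly right---specifically, choosing each $\gamma_{n+1}$ to dominate $\sup A_{\gamma_n}$, so that the regressive value $f(\gamma)$ at the limit is trapped below a stage it cannot reach. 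This tension between $f$ pushing values down and the stages climbing up is precisely what drives the contradiction. (One could alternatively deduce the result from the pressing-down machinery of club and stationary sets via a diagonal intersection, but the elementary closing-off argument above is self-contained and avoids that apparatus.)
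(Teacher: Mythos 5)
Your proof is correct and follows essentially the same route as the paper: assume all fibers are countable, deduce that each set $\{\alpha : f(\alpha)<\gamma\}$ is countable, build an increasing sequence $\gamma_n$ closing off under these sets, and derive a contradiction from regressivity at the limit $\gamma=\sup_n\gamma_n$. You also correctly identify (and work around) the typo in the statement, which should read ``$f(\alpha)=\beta$ for uncountably many $\alpha<\omega_1$.''
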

\begin{proof}
Assume to the contrary that for each $\beta<\omega_{1}$, we have $f(\alpha)=\beta$ for only countably many ordinals $\alpha$.
Then for each $\beta<\omega_{1}$, we have $f(\alpha)\leq\beta$ for only countably many ordinals $\alpha$. Therefore, by induction, there
is an increasing sequence $(\alpha_{n})_{n}$ such that if $f(\beta)\geq\alpha_{n+1}$, then $f(\beta)\geq\alpha_{n}$ for all $n$.
However, if we let $\alpha=^{\lim}_{n\rightarrow\infty}\alpha_{n}$, then $\alpha\geq\alpha_{n+1}$ for all $n$, so
$f(\alpha)\geq\alpha_{n}$ for all $n$, hence $f(\alpha)\geq\alpha$. This contradicts the assumption that
$f(\alpha)<\alpha$ for each $\alpha<\omega_{1}$.
\end{proof}
\begin{prop}
The space $\omega_{1}$ of all countable ordinals with the order topology is ultranormal, but not paracompact.
\end{prop}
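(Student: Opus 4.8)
The plan is to verify the two assertions separately: first that $\omega_1$ is ultranormal, and then that it fails to be paracompact. The engine behind the first part is the observation that $\omega_1$ admits no two disjoint closed unbounded sets, while the engine behind the second is the pressing-down Lemma proved just above.

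For ultranormality, let $R,S$ be disjoint closed sets. I would first record that they cannot both be cofinal in $\omega_1$: if both were unbounded, I could interleave them to build a strictly increasing sequence $r_0<s_0<r_1<s_1<\cdots$ with $r_n\in R$ and $s_n\in S$, and its supremum $\lambda$ (a sup of a countable set, hence $<\omega_1$) would be a limit of points of each set, so $\lambda\in\overline R\cap\overline S=R\cap S$, contradicting disjointness. Hence at least one of them, say $S$, is bounded, so $S\subseteq[0,\beta]$ for some $\beta<\omega_1$. The segment $[0,\beta]=[0,\beta+1)$ is clopen in $\omega_1$ and, being order-isomorphic to a successor ordinal, is a compact zero-dimensional Hausdorff space. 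In such a space I can separate the disjoint closed sets $S$ and $R\cap[0,\beta]$ by a set $D$ that is clopen relative to $[0,\beta]$ with $S\subseteq D$ and $D\cap R=\emptyset$, by covering the compact set $S$ with finitely many basic clopen sets that miss $R\cap[0,\beta]$. Since $[0,\beta]$ is itself clopen in $\omega_1$, the set $D$ is clopen in $\omega_1$, and then $C=\omega_1\setminus D$ is clopen with $R\subseteq C$ and $S\subseteq C^c$, as required. (If instead $R$ is the bounded set, I separate $R$ from $S$ in the same way and take $C$ to be the separating set directly.)

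For the failure of paracompactness, I would test the open cover $\mathcal U=\{[0,\alpha)\mid\alpha<\omega_1\}$ by initial segments and suppose, toward a contradiction, that $\mathcal V$ is a locally finite open refinement. Since each $V\in\mathcal V$ lies in some $[0,\alpha)$, every member of $\mathcal V$ is bounded. For each limit ordinal $\alpha$ I choose $V_\alpha\in\mathcal V$ containing $\alpha$; as $V_\alpha$ is open it contains a tail $(h(\alpha),\alpha]$ with $h(\alpha)<\alpha$, which makes $h$ a regressive function. Applying the Lemma to $h$ yields a fixed $\gamma<\omega_1$ and an uncountable set $A$ with $h(\alpha)=\gamma$ for all $\alpha\in A$, so that $\gamma+1\in(\gamma,\alpha]\subseteq V_\alpha$ for every $\alpha\in A$ with $\alpha\ge\gamma+1$.

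Then I would argue that the sets $V_\alpha$ arising from these $\alpha$ cannot come from a finite list: a finite family of bounded subsets of $\omega_1$ has a common bound, which would bound the unbounded set $A$. Hence infinitely many distinct members of $\mathcal V$ contain the single point $\gamma+1$, contradicting the point-finiteness that local finiteness forces at $\gamma+1$. I expect this last step — extracting infinitely many distinct refining sets through one point — to be the main obstacle, since it is where boundedness of the members, unboundedness of $A$, and the pressing-down Lemma must be combined; verifying that the tail $(h(\alpha),\alpha]$ can always be chosen inside a single $V_\alpha$, so that $h$ is genuinely regressive and the Lemma applies, is the small technical point on which the whole argument turns.
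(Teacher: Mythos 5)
Your proposal is correct and follows essentially the same route as the paper: one of the two disjoint closed sets must be bounded, so the separation reduces to the compact zero-dimensional segment $[0,\beta]$ (which is clopen in $\omega_1$), and non-paracompactness is obtained by applying the pressing-down lemma to the regressive function coming from a putative locally finite refinement of $\{[0,\alpha)\mid\alpha<\omega_1\}$. You merely fill in details the paper leaves implicit, namely the interleaving argument showing the closed sets cannot both be unbounded, and the use of boundedness of the refining sets against unboundedness of the fiber to extract infinitely many distinct members through the single point $\gamma+1$.
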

\begin{proof}
We shall first show that $\omega_{1}$ is ultranormal.
If $R,S$ are two disjoint closed subsets of $\omega_{1}$, then either $R$ or $S$ is bounded, so say $R$ is bounded by an ordinal $\alpha$. Then since $[0,\alpha]$ is compact and zero-dimensional, the set $[0,\alpha]$ is ultranormal. Therefore there is a clopen subset $C\subseteq[0,\alpha]$ with $R\subseteq C$ and $S\cap C=\emptyset$. However, the set $C$ is clopen in $\omega_{1}$ as well. Therefore $\omega_{1}$ is ultranormal.

We shall now show that $\omega_{1}$ is not paracompact. We shall show that the cover
$\{[0,\alpha)|\alpha<\omega_{1}\}$ does not have a locally finite open refinement since if $\mathcal{U}$ is an open refinement of $[0,\alpha)$, then for each $\alpha<\omega_{1}$ there is some $x_{\alpha}<\alpha$ where $(x_{\alpha},\alpha]\subseteq U$ for some $U\in\mathcal{U}$. However, since the mapping $\alpha\mapsto x_{\alpha}$ is regressive, there is an ordinal $\beta$ where $x_{\alpha}<\beta$ for uncountably many $\alpha$. Since $\mathcal{U}$ refines $\{[0,\alpha)|\alpha<\omega_{1}\}$, each $U\in\mathcal{U}$ is bounded, so the ordinal $\beta$ must be contained in uncountably many $U\in\mathcal{U}$.
\end{proof}

In the paper \cite{R}, Prabir Roy shows that certain space $\Delta$ is a complete metric space of cardinality continuum which is zero-dimensional, but not ultranormal, and hence not ultraparacompact. In fact, later in the paper \cite{N}, Peter Nyikos shows that this space is not even $\mathbb{N}$-compact (a space is \emph{$\mathbb{N}$-compact} if and only if it can be embedded as a closed subspace of a product $\mathbb{N}^{I}$ for some set $I$). This result strengthens Roy's result since every ultraparacompact space of non-measurable cardinality is $\mathbb{N}$-compact and the first measurable cardinal is terribly large if it even exists.

There are zero-dimensional locally compact spaces that are not ultranormal. The Tychonoff plank $X:=((\omega_{1}+1)\times(\omega+1))\setminus\{(\omega_{1},\omega)\}$ is zero-dimensional (even strongly zero-dimensional; i.e. $\beta X$ is zero-dimensional) but not ultranormal.

We shall now go over some results about ultraparacompact and ultranormal spaces. The following result gives many examples of ultraparacompact spaces.
\begin{prop}
Every Lindelof zero-dimensional space is ultraparacompact.
\end{prop}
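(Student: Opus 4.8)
The plan is to reduce an arbitrary open cover to a countable clopen cover and then disjointify it by the standard telescoping construction. First I would let $\mathcal{U}$ be an open cover of a Lindelof zero-dimensional space $X$. Since $X$ is zero-dimensional, every point $x$ lies in some $U\in\mathcal{U}$ and, by the existence of a clopen basis, in a clopen set $V_{x}$ with $x\in V_{x}\subseteq U$. The family $\{V_{x}\mid x\in X\}$ is then a clopen cover of $X$ that refines $\mathcal{U}$, so there is no loss of generality in assuming that the members of the cover are already clopen.

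Next I would invoke the Lindelof property to extract a countable subcover $\{V_{n}\mid n\in\omega\}$ consisting of clopen sets, each of which is contained in some member of $\mathcal{U}$. The key step is to turn this countable clopen cover into a partition by the telescoping trick: set $W_{0}=V_{0}$ and $W_{n}=V_{n}\setminus(V_{0}\cup\cdots\cup V_{n-1})$ for $n\geq 1$. Each $W_{n}$ is a finite Boolean combination of clopen sets and hence clopen; the sets $W_{n}$ are pairwise disjoint by construction; and $\bigcup_{n}W_{n}=\bigcup_{n}V_{n}=X$. Since $W_{n}\subseteq V_{n}\subseteq U$ for some $U\in\mathcal{U}$, the partition $\{W_{n}\mid n\in\omega\}$ refines $\mathcal{U}$, which is exactly what ultraparacompactness demands.

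There is no serious obstacle in this argument; the only point requiring care is the interaction between countability and clopenness. The telescoping construction preserves clopenness precisely because at each stage only a \emph{finite} union $V_{0}\cup\cdots\cup V_{n-1}$ is subtracted, and a finite union of clopen sets is clopen. Had we instead been forced to work with an uncountable subcover indexed by ordinals, the differences $V_{\alpha}\setminus\bigcup_{\beta<\alpha}V_{\beta}$ would involve infinite unions of clopen sets, which need not be closed, and the construction would collapse. Thus the Lindelof hypothesis is not merely a convenience but is exactly what keeps the disjointification inside the Boolean algebra of clopen sets.
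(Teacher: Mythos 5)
Your proof is correct and follows essentially the same route as the paper: refine the cover by clopen sets using zero-dimensionality, pass to a countable subcover by the Lindelof property, and disjointify via the telescoping sets $V_{n}\setminus(V_{0}\cup\cdots\cup V_{n-1})$, which remain clopen because only finitely many clopen sets are subtracted at each stage. The only cosmetic remark is that some $W_{n}$ may be empty and should be discarded if one insists that a partition consist of nonempty sets.
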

\begin{proof}
Let $X$ be Lindelof and zero-dimensional. Let $\mathcal{U}$ be an open cover of $X$. Since $X$ is zero-dimensional,
the cover $\mathcal{U}$ is refinable by a clopen cover $\mathcal{C}$. Let $(C_{n})_{n}$ be a countable subcover of
$\mathcal{C}$ and let $R_{n}=C_{n}\setminus\bigcup_{m<n}C_{m}$ for all $n$. Then $\{R_{n}|n\in\mathbb{N}\}$ is a partition of
$X$ into clopen sets that refines $\mathcal{U}$.
\end{proof}
The following result shows that the locally compact ultraparacompact spaces have the simplest possible characterization.
\begin{prop}
Let $X$ be a locally compact zero-dimensional space. Then $X$ is ultraparacompact if and only if $X$ can be partitioned into a family of compact open sets.
\end{prop}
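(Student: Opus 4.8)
The plan is to prove the two implications separately, exploiting the fact that in a locally compact zero-dimensional space every point has a \emph{compact} clopen neighbourhood, and that a clopen subset of a compact set is itself compact.

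First, for the forward direction, suppose $X$ is ultraparacompact. I would begin by observing that local compactness together with zero-dimensionality yields, for each $x\in X$, a clopen set $C$ with $x\in C\subseteq K$ for some compact neighbourhood $K$ of $x$; since $C$ is closed and contained in the compact set $K$, the set $C$ is compact open. The family of all such compact open sets is then an open cover $\mathcal{U}$ of $X$. Applying ultraparacompactness to $\mathcal{U}$ produces a clopen partition $\mathcal{P}$ refining $\mathcal{U}$. Each member $P\in\mathcal{P}$ is clopen and contained in some compact open $U\in\mathcal{U}$; being a closed subset of the compact set $U$, the set $P$ is compact. Hence $\mathcal{P}$ is the desired partition of $X$ into compact open sets.

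For the converse, suppose $X=\bigcup_{i\in I}K_{i}$ is a partition into compact open sets, and let $\mathcal{U}$ be an arbitrary open cover of $X$. Each $K_{i}$, being open and (by compactness in a Hausdorff space) closed, is clopen in $X$; as a clopen subspace of a zero-dimensional space it is zero-dimensional, and being compact it is Lindelof. By the preceding proposition each $K_{i}$ is therefore ultraparacompact. I would then apply ultraparacompactness of $K_{i}$ to the open cover $\{U\cap K_{i}\mid U\in\mathcal{U}\}$ to obtain a partition $\mathcal{P}_{i}$ of $K_{i}$ into sets clopen in $K_{i}$ and refining $\mathcal{U}$. The key point is that since $K_{i}$ is clopen in $X$, every set clopen in $K_{i}$ is in fact clopen in $X$. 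Taking $\mathcal{P}=\bigcup_{i\in I}\mathcal{P}_{i}$ then gives a clopen partition of $X$ refining $\mathcal{U}$, because the $K_{i}$ partition $X$ and each $\mathcal{P}_{i}$ partitions $K_{i}$.

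The routine verifications (that the union of the local partitions is genuinely a partition of $X$ and that it refines $\mathcal{U}$) are immediate. The only step requiring care is the transfer of clopenness between $K_{i}$ and $X$, which is exactly where clopenness of the pieces $K_{i}$ --- guaranteed by compactness in the Hausdorff setting --- is used; I expect this to be the main, though minor, obstacle.
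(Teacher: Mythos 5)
Your proof is correct and follows essentially the same route as the paper: for the forward direction the paper covers $X$ by open sets with compact closure and refines by a clopen partition (your compact clopen neighbourhoods are a trivial variant), and the reverse direction, which the paper dismisses as "fairly trivial," you simply spell out via the Lindel\"of zero-dimensional proposition and the transfer of clopenness from the clopen pieces $K_{i}$ to $X$. No gaps.
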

\begin{proof}
The direction $\leftarrow$ is fairly trivial. To prove $\rightarrow$ assume that $X$ is a locally compact zero-dimensional space. Then let $\mathcal{U}$ the collection of all open sets $U$ such that $\overline{U}$ is compact. Then since $X$ is locally compact, $\mathcal{U}$ is a cover for $X$. Therefore there is a partition $P$ of $X$ into clopen sets that refines $\mathcal{U}$. Clearly each $R\in P$ is a compact open subset of $X$. \end{proof}
Ultraparacompact spaces and ultranormal spaces both satisfy a generalized version of Tietze's extension theorem.
\begin{thm}
1. Let $C$ be a closed subset of an ultraparacompact space $X$ and let $Y$ be a complete metric space. Then every
continuous function $f:C\rightarrow Y$ can be extended to a continuous function $F:X\rightarrow Y$.

2. Let $C$ be a closed subset of an ultranormal space $X$ and let $Y$ be a complete separable metric space.
Then every continuous function $f:C\rightarrow Y$ can be extended to a continuous function $F:X\rightarrow Y$.
\end{thm}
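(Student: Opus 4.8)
The plan is to prove both parts by the same device: approximate $f$ from the outside by a sequence of \emph{locally constant} maps $F_n\colon X\to Y$, each constant on the blocks of a clopen partition $\mathcal P_n$ of $X$, with the partitions growing finer, arranged so that $d(F_n(x),f(x))<2^{-n}$ for all $x\in C$ and $\sup_{x\in X}d(F_{n+1}(x),F_n(x))\le \tfrac32\,2^{-n}$. A map that is constant on each block of a clopen partition is continuous, since the preimage of any set is a union of blocks and hence open; so each $F_n$ is continuous. The uniform Cauchy estimate together with completeness of $Y$ then produces a uniform limit $F=\lim_n F_n$, which is continuous as a uniform limit of continuous maps into a metric space, and since $d(F_n,f)<2^{-n}$ on $C$ the limit satisfies $F|_C=f$. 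Thus everything reduces to the inductive construction of the $F_n$, and the only place where the two hypotheses differ is in the manufacture of the clopen partitions.

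For part 1 I would use the same refinement recipe at every stage, applying it at the initial stage to the trivial partition $\{X\}$ and the cover of $Y$ by balls of radius $1$ to obtain $\mathcal P_0$ and $F_0$ with $d(F_0,f)<1$ on $C$ (there being no earlier function to stay close to). In general, given $F_n$, equal to $c_P$ on each block $P$ of $\mathcal P_n$, cover $Y$ by the balls $\{B(y,2^{-(n+1)})\mid y\in Y\}$. Each $f^{-1}(B(y,2^{-(n+1)}))$ is open in $C$, hence of the form $C\cap U_y$ for some open $U_y\subseteq X$; together with $X\setminus C$ these form an open cover of $X$. Intersecting it with $\mathcal P_n$ and invoking ultraparacompactness, refine by a clopen partition $\mathcal P_{n+1}$, which then refines $\mathcal P_n$. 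On a block $Q\subseteq P$ of $\mathcal P_{n+1}$ meeting $C$ we have $f(Q\cap C)\subseteq B(y,2^{-(n+1)})$ for some $y$; set $F_{n+1}\equiv y$ there, and on a block missing $C$ set $F_{n+1}\equiv c_P$. Choosing a witness $x_0\in Q\cap C$ gives $d(y,c_P)\le d(y,f(x_0))+d(f(x_0),c_P)<2^{-(n+1)}+2^{-n}$, bounding $d(F_{n+1},F_n)$ uniformly, while $d(F_{n+1},f)<2^{-(n+1)}$ on $C$ by construction.

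For part 2 the same scheme works once the refinement step is replaced, and this is the only point where separability of $Y$ enters. Fix a countable dense set $\{y_k\}\subseteq Y$. At the $(n+1)$st stage the balls $B(y_k,2^{-(n+1)})$ cover $Y$, and for every $x\in C$ some $y_k$ lies within $2^{-(n+2)}$ of $f(x)$. Put $V_k=f^{-1}(B(y_k,2^{-(n+1)}))$, open in $C$, and $G_k=\{x\in C\mid d(f(x),y_k)\le 2^{-(n+2)}\}$, which is closed in $X$ because $C$ is; then $\{G_k\}$ covers $C$ and $G_k\subseteq V_k$. The closed sets $G_k$ and $C\setminus V_k$ are disjoint, so ultranormality supplies a clopen $V_k'$ with $G_k\subseteq V_k'$ and $C\cap V_k'\subseteq V_k$. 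Disjointifying, $R_k=V_k'\setminus\bigcup_{j<k}V_j'$ is clopen, the $R_k$ are pairwise disjoint, $\bigcup_k R_k\supseteq\bigcup_k G_k=C$, and $f(C\cap R_k)\subseteq B(y_k,2^{-(n+1)})$; adjoining $R_0=X\setminus\bigcup_k V_k'$ yields a clopen partition of $X$ refining the pulled-back cover, exactly what ultraparacompactness provided for free in part 1. Intersecting with $\mathcal P_n$ and assigning values as before completes the inductive step.

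The routine points — that a map constant on clopen blocks is continuous, that a uniform limit into a complete metric space exists and is continuous, and the bookkeeping of the constants — I expect to be painless. The one genuinely load-bearing step is the part-2 lemma: converting a \emph{countable} open cover of the closed set $C$ into a clopen partition of $X$ refining it, using nothing but the pairwise separation of closed sets afforded by ultranormality. The trick is to feed ultranormality the pair $(G_k,\,C\setminus V_k)$ coming from the closed balls of half the radius and then to disjointify; and it is precisely the passage to a countable cover — that is, the separability of $Y$ — that lets the disjointification exhaust $C$, whereas in part 1 ultraparacompactness handles arbitrary covers and so no separability is needed.
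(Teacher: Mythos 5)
Your overall architecture---uniformly Cauchy locally constant approximations on successively finer clopen partitions, followed by a passage to the limit using completeness of $Y$---is sound, and Part 1 goes through as you describe (for comparison, the paper does not prove this theorem at all; it only cites Ellis). The genuine gap is in Part 2, at the sentence ``adjoining $R_{0}=X\setminus\bigcup_{k}V_{k}'$ yields a clopen partition of $X$.'' Each $V_{k}'$ is clopen, but there are countably many of them, and a countable union of clopen sets is open but in general not closed (e.g.\ $\bigcup_{n}\{1/n\}$ inside the space $\{0\}\cup\{1/n\mid n\geq 1\}$). Hence $R_{0}$ is closed but need not be open, the family $\{R_{k}\}_{k\geq 1}\cup\{R_{0}\}$ need not be a clopen partition, and your blanket justification ``a map constant on the blocks of a clopen partition is continuous'' does not apply: $F_{n+1}$ could a priori be discontinuous at points of $R_{0}$ lying in the closure of $\bigcup_{k}V_{k}'$. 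Since continuity of each $F_{n}$ is exactly what feeds the uniform-limit step, this is a load-bearing omission rather than a routine detail.

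The repair costs one more application of ultranormality. The set $D=X\setminus\bigcup_{k}V_{k}'$ is closed and disjoint from $C$ (because $C=\bigcup_{k}G_{k}\subseteq\bigcup_{k}V_{k}'$), so there is a clopen $W$ with $C\subseteq W$ and $W\cap D=\emptyset$, i.e.\ $W\subseteq\bigcup_{k}V_{k}'$. Replacing each $V_{k}'$ by $V_{k}'\cap W$ makes the union of the new sets equal to $W$, which is clopen; then $R_{0}=X\setminus W$ is clopen, the disjointification really does produce a clopen partition, and the key property $C\cap R_{k}\subseteq V_{k}$ is preserved. (Alternatively, one can note that even a discontinuous $F_{n+1}$ has oscillation at most $3\cdot 2^{-n}$ at every point, since it stays uniformly within $\tfrac{3}{2}\cdot 2^{-n}$ of the continuous $F_{n}$, and a uniform limit of functions whose oscillations tend to $0$ is continuous; but the clopen shrinking by $W$ is cleaner.) With that repair your argument is correct.
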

\begin{proof}
See \cite{E}.
\end{proof}
Every closed subset of an ultranormal space is ultranormal, and every closed subset of an ultraparacompact space is ultraparacompact.
On the other hand, ultranormality and ultraparacompactness are not closed under taking arbitrary subspaces. The good news is that there are nice conditions that allow us to determine whether every subspace of an ultraparacompact space is ultraparacompact.
\begin{prop}
Let $X$ be a space. If each open subset of $X$ is ultraparacompact, then every subset of $X$ is ultraparacompact.
\end{prop}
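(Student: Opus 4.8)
The plan is to reduce the case of an arbitrary subset to the open-subset hypothesis by passing to a suitable open ``hull'' of the subset. Let $A \subseteq X$ be arbitrary, and let $\mathcal{U}$ be a cover of $A$ by sets open in the subspace topology on $A$. First I would write each $U \in \mathcal{U}$ as $U = A \cap V_U$ for some set $V_U$ open in $X$, and put $V = \bigcup_{U \in \mathcal{U}} V_U$. Then $V$ is open in $X$, and since $\mathcal{U}$ covers $A$ we have $A = \bigcup_{U} U = A \cap \bigcup_{U} V_U = A \cap V$, so $A \subseteq V$; consequently the subspace topology $A$ inherits from $X$ is the same as the one it inherits from $V$.

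The key step is to observe that $\{V_U : U \in \mathcal{U}\}$ is an open cover of $V$. Applying the hypothesis to the \emph{open} subset $V$, which is therefore ultraparacompact, I obtain a partition $\mathcal{P}$ of $V$ into sets clopen in $V$ that refines $\{V_U\}$. I would then cut this partition down to $A$, setting $\mathcal{P}' = \{\, P \cap A : P \in \mathcal{P},\ P \cap A \neq \emptyset \,\}$, discarding the empty pieces so that $\mathcal{P}'$ is a genuine partition.

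Finally I would check the three requirements. First, $\mathcal{P}'$ partitions $A$, since $\mathcal{P}$ partitions $V$ and $A \subseteq V$. Second, each $P \cap A$ is clopen in $A$: here $P$ is clopen in $V$, and because $A$ carries exactly the subspace topology from $V$, intersecting with $A$ preserves both openness and closedness. Third, $\mathcal{P}'$ refines $\mathcal{U}$: for each $P \in \mathcal{P}$ there is some $U$ with $P \subseteq V_U$, and then $P \cap A \subseteq V_U \cap A = U$. Together these show that $A$ is ultraparacompact.

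I do not expect a serious obstacle; the whole argument hinges on a single idea, namely replacing the arbitrary subset $A$ by the open set $V = \bigcup_U V_U$ that it sits densely enough inside. The one point that genuinely needs care is the bookkeeping with subspace topologies, in particular the fact that ``clopen in $V$'' restricts to ``clopen in $A$'' precisely because $A \subseteq V$ and the two subspace topologies on $A$ coincide. This is exactly why working through the open hull $V$, rather than trying to manipulate clopen sets inside $X$ directly, is the crucial move.
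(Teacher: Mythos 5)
Your proposal is correct and is essentially identical to the paper's own proof: extend the relative open cover of $A$ to open sets $V_U$ in $X$, pass to the open hull $V=\bigcup_U V_U$, apply ultraparacompactness of the open set $V$, and intersect the resulting clopen partition with $A$. Your write-up is in fact slightly more careful than the paper's, since you explicitly discard empty pieces and verify that clopenness in $V$ restricts to clopenness in $A$.
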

\begin{proof}
Let $A\subseteq X$ and let $\{U_{\alpha}|\alpha\in\mathcal{A}\}$ be an open cover of $A$. Then extend each $U_{\alpha}$ to an open
$V_{\alpha}$ with $A\cap V_{\alpha}=U_{\alpha}$. Let $V=\bigcup_{\alpha\in\mathcal{A}}V_{\alpha}$. Then
$\{V_{\alpha}|\alpha\in\mathcal{A}\}$ is an open cover of $V$, so $\{V_{\alpha}|\alpha\in\mathcal{A}\}$ is refinable by some partition
$P$ of $V$ into clopen subsets of $V$. Therefore $\{R\cap A|R\in P\}$ is the required partition of $A$ into open sets.
\end{proof}

We will need a few definitions in order to state and obtain some nice characterizations of ultraparacompact spaces and ultranormal spaces.
A subset $Z$ of a space $X$ is said to be a \emph{zero set} if there is a continuous function $f:X\rightarrow[0,1]$ such that $Z=f^{-1}[\{0\}]$. A completely regular space $X$ is said to be \emph{strongly zero-dimensional} if whenever $Z_{1},Z_{2}$ are disjoint zero sets, there is a clopen set $C$ with $Z_{1}\subseteq C$ and $Z_{2}\subseteq C^{c}$. It is not too hard to show that a completely regular space $X$ is strongly zero-dimensional if and only if the Stone-Cech compactification $\beta X$ is zero-dimensional. We say that a cover $\mathcal{R} $ of a topological space $X$ is \emph{point-finite} if $\{R\in\mathcal{R}|x\in R\}$ is finite for each $x\in X$, and we say that $\mathcal{R}$ is \emph{locally finite} if each $x\in X$ has a neighborhood $U$ such that $\{R\in\mathcal{R}|U\cap R\neq\emptyset\}$ is finite. Recall that a Hausdorff space is paracompact iff every open cover has a locally finite open refinement.

A uniform space $(X,\mathcal{U})$ is said to be \emph{non-Archimedean} if $\mathcal{U}$ is generated by equivalence relations. In other words, for each $R\in\mathcal{U}$ there is an equivalence relation $E\in\mathcal{U}$ with $E\subseteq R$. Clearly, every non-Archimedean uniform space is zero-dimensional. If $(X,\mathcal{U})$ is a uniform space, then let $H(X)$ denote the set of all closed subsets of $X$. If $E\in\mathcal{U}$, then let $\hat{E}$ be the relation on $H(X)$ where $(C,D)\in\hat{E}$ if and only if $C\subseteq E[D]=\{y\in X|(x,y)\in E\textrm{for some}x\in D\}$ and $D\subseteq E[C]$. Then the system $\{\hat{E}|E\in\mathcal{U}\}$ generates a uniformity $\hat{\mathcal{U}}$ on $H(X)$ called the \emph{hyperspace} uniformity. A uniform space $(X,\mathcal{U})$ is said to be \emph{supercomplete} if the hyperspace $H(X)$ is a complete uniform space.

\begin{thm}
Let $X$ be a Hausdorff space. Then $X$ is normal if and only if whenever $(U_{\alpha})_{\alpha\in\mathcal{A}}$ is a point-finite open covering of $X$, there is an open covering $(V_{\alpha})_{\alpha\in\mathcal{A}}$ such that $\overline{V_{\alpha}}\subseteq U_{\alpha}$ for $\alpha\in\mathcal{A}$ and $V_{\alpha}\neq\emptyset$ whenever $U_{\alpha}\neq\emptyset$.
\end{thm}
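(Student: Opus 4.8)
The plan is to prove the two implications separately. The reverse implication is the easy one: assuming the stated shrinking property, I would verify normality directly. Given disjoint closed sets $A,B\subseteq X$, the two open sets $U_{1}=X\setminus A$ and $U_{2}=X\setminus B$ form a (finite, hence point-finite) open cover of $X$, since $A\cap B=\emptyset$. Applying the hypothesis yields open sets $V_{1},V_{2}$ with $\overline{V_{1}}\subseteq X\setminus A$, $\overline{V_{2}}\subseteq X\setminus B$, and $V_{1}\cup V_{2}=X$. Then $X\setminus\overline{V_{1}}$ and $X\setminus\overline{V_{2}}$ are open sets containing $A$ and $B$ respectively, and they are disjoint because $\overline{V_{1}}\cup\overline{V_{2}}\supseteq V_{1}\cup V_{2}=X$. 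Hence $A$ and $B$ are separated and $X$ is normal.

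For the forward implication I would prove the classical shrinking lemma for point-finite covers by transfinite recursion. Fix a well-ordering of the index set $\mathcal{A}$ and build the family $(V_{\alpha})$ one index at a time, maintaining throughout the invariant $I(\alpha)$: the family $\{V_{\beta}\mid\beta<\alpha\}\cup\{U_{\beta}\mid\beta\geq\alpha\}$ covers $X$. The base case $I(0)$ is just the hypothesis that $(U_{\alpha})$ is a cover. To pass from $I(\alpha)$ to $I(\alpha+1)$, consider the set $F_{\alpha}=X\setminus\bigl(\bigcup_{\beta<\alpha}V_{\beta}\cup\bigcup_{\beta>\alpha}U_{\beta}\bigr)$, which is closed and, by $I(\alpha)$, contained in $U_{\alpha}$. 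Since $X$ is normal, the disjoint closed sets $F_{\alpha}$ and $X\setminus U_{\alpha}$ can be separated, which produces an open $V_{\alpha}$ with $F_{\alpha}\subseteq V_{\alpha}\subseteq\overline{V_{\alpha}}\subseteq U_{\alpha}$; replacing $U_{\alpha}$ by $V_{\alpha}$ restores the invariant as $I(\alpha+1)$.

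The crux of the argument, and the only place where point-finiteness is used, is verifying the invariant at limit ordinals $\lambda$ (and, by the same reasoning, at the very end). Given $x\in X$, point-finiteness makes the set $\{\beta\mid x\in U_{\beta}\}$ finite. If some such index is $\geq\lambda$, then $x$ is already covered by the tail $\{U_{\beta}\mid\beta\geq\lambda\}$. Otherwise let $\alpha^{*}$ be the largest index with $x\in U_{\alpha^{*}}$; since $\alpha^{*}<\lambda$ and $x$ lies in no $U_{\beta}$ with $\beta>\alpha^{*}$, the invariant $I(\alpha^{*}+1)$ forces $x\in V_{\beta}$ for some $\beta\leq\alpha^{*}<\lambda$. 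Either way $x$ is covered, so $I(\lambda)$ holds. Running the recursion to completion and applying this argument once more shows that $(V_{\alpha})_{\alpha\in\mathcal{A}}$ is an open cover with $\overline{V_{\alpha}}\subseteq U_{\alpha}$ for every $\alpha$.

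It remains to arrange that $V_{\alpha}\neq\emptyset$ whenever $U_{\alpha}\neq\emptyset$, and here I would adjust the cover after the fact rather than inside the recursion. Since $X$ is Hausdorff it is $T_{1}$, so together with normality it is regular; hence for each $\alpha$ with $U_{\alpha}\neq\emptyset$ I can pick a point $p_{\alpha}\in U_{\alpha}$ and an open set $W_{\alpha}$ with $p_{\alpha}\in W_{\alpha}\subseteq\overline{W_{\alpha}}\subseteq U_{\alpha}$, and then replace $V_{\alpha}$ by $V_{\alpha}\cup W_{\alpha}$. This keeps $\overline{V_{\alpha}}\subseteq U_{\alpha}$, cannot destroy the covering property (the family only grows), and guarantees $V_{\alpha}\neq\emptyset$. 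The main obstacle throughout is the limit-stage bookkeeping in the transfinite recursion, which is exactly where point-finiteness is indispensable; the successor step and both boundary cleanups are routine.
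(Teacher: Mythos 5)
Your proof is correct and follows exactly the strategy the paper sketches (well-order $\mathcal{A}$, shrink the $U_{\alpha}$ one at a time by normality while maintaining the covering invariant, with point-finiteness doing the work at limit stages); the paper itself only gives this outline and defers the details to Dugundji. Your reverse implication and the final adjustment to ensure $V_{\alpha}\neq\emptyset$ are also handled correctly.
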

To prove the above result, one first well orders the set $\mathcal{A}$, then one shrinks the sets $U_{\alpha}$ to sets $V_{\alpha}$ in such a way that you still cover your space $X$ at every point in the induction process. See the book \cite{D} for a proof of the above result.

\begin{thm} Let $X$ be a Hausdorff space. The following are equivalent.

\begin{enumerate}

\item $X$ is ultranormal.

\item Whenever $(U_{\alpha})_{\alpha\in\mathcal{A}}$ is a point-finite open cover of $X$, there is a clopen cover $(V_{\alpha})_{\alpha\in\mathcal{A}}$ such that $V_{\alpha}\subseteq U_{\alpha}$ for each $\alpha$ and $V_{\alpha}\neq\emptyset$ whenever $U_{\alpha}\neq\emptyset$.

\item If $(U_{\alpha})_{\alpha\in\mathcal{A}}$ is a locally-finite open cover of $X$, then there is system $(P_{\alpha})_{\alpha\in\mathcal{A}}$ of clopen sets such that $P_{\alpha}\subseteq U_{\alpha}$ for $\alpha\in\mathcal{A}$ and $P_{\alpha}\cap P_{\beta}=\emptyset$ whenever $\alpha,\beta\in\mathcal{A}$ and $\alpha\neq\beta$.

\item $X$ is normal and strongly zero-dimensional.

\end{enumerate}
\label{SK7y3arekl}
\end{thm}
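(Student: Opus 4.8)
The plan is to prove the equivalence of the four conditions by establishing a cycle of implications, organizing them so that the hardest topological work appears only once.

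First I would prove $(1)\Rightarrow(2)$. Given a point-finite open cover $(U_\alpha)_{\alpha\in\A}$ of an ultranormal space, I would invoke Theorem~\ref{SK7y3arekl}'s predecessor (the shrinking lemma for normal spaces, stated just above) to obtain an open cover $(W_\alpha)$ with $\overline{W_\alpha}\subseteq U_\alpha$ and $W_\alpha\neq\emptyset$ whenever $U_\alpha\neq\emptyset$. Now the sets $\overline{W_\alpha}$ and $X\setminus U_\alpha$ are disjoint closed sets, so ultranormality supplies a clopen $V_\alpha$ with $\overline{W_\alpha}\subseteq V_\alpha\subseteq U_\alpha$. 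Since each $W_\alpha\subseteq V_\alpha$ and the $W_\alpha$ already cover $X$, the clopen family $(V_\alpha)$ covers $X$, and $V_\alpha\supseteq W_\alpha\neq\emptyset$ gives the nonemptiness clause. This is the step where I expect the main obstacle, because it relies essentially on the preceding shrinking theorem; the subtlety is that ultranormality only separates \emph{disjoint closed} sets, so one must first shrink $(U_\alpha)$ to a cover whose closures sit inside the $U_\alpha$ before the clopen separation can be applied.

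Next I would prove $(2)\Rightarrow(3)$. Given a locally-finite (hence point-finite) open cover $(U_\alpha)$, condition~(2) yields a clopen cover $(V_\alpha)$ with $V_\alpha\subseteq U_\alpha$. To disjointify, I would well-order $\A$ and set $P_\alpha=V_\alpha\setminus\bigcup_{\beta<\alpha}V_\beta$. Each $P_\alpha$ is clopen provided the union $\bigcup_{\beta<\alpha}V_\beta$ is clopen; here local finiteness is what guarantees that arbitrary unions of the relevant clopen sets remain closed, so the difference stays clopen. The pairwise disjointness $P_\alpha\cap P_\beta=\emptyset$ for $\alpha\neq\beta$ is immediate from the construction, and each $P_\alpha\subseteq V_\alpha\subseteq U_\alpha$.

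Then I would close the loop with $(3)\Rightarrow(4)$ and $(4)\Rightarrow(1)$. For $(3)\Rightarrow(4)$, normality follows because a disjointification property of this strength forces clopen separation on finite covers; to get strong zero-dimensionality I would take two disjoint zero sets $Z_1,Z_2$, form the locally-finite (in fact finite) open cover $\{X\setminus Z_1,\,X\setminus Z_2\}$, apply~(3) to extract disjoint clopen pieces, and assemble a clopen set separating $Z_1$ from $Z_2$, which characterizes strong zero-dimensionality via $\beta X$ being zero-dimensional as noted in the excerpt. Finally, $(4)\Rightarrow(1)$ is the most direct: if $X$ is normal and strongly zero-dimensional and $R,S$ are disjoint closed sets, normality gives a continuous $f\colon X\to[0,1]$ with $f[R]=\{0\}$ and $f[S]=\{1\}$ by Urysohn's lemma, so $R$ and $S$ are contained in the disjoint zero sets $f^{-1}[\{0\}]$ and $f^{-1}[\{1\}]$; strong zero-dimensionality then produces the separating clopen set, which is exactly ultranormality. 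This last implication is the cleanest and serves to confirm that condition~(4) is genuinely the external characterization promised by the theorem.
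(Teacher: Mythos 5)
Your proposal is correct and follows essentially the same route as the paper: the shrinking lemma plus ultranormality for $(1)\Rightarrow(2)$, well-ordering and local finiteness to disjointify for $(2)\Rightarrow(3)$, and Urysohn's lemma combined with strong zero-dimensionality for $(4)\Rightarrow(1)$. The only difference is organizational: you close the loop as $(3)\Rightarrow(4)\Rightarrow(1)$, whereas the paper records $(3)\Rightarrow(1)$ and $(1)\Rightarrow(4)$ separately, both of which are immediate.
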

\begin{proof}
$1\rightarrow 2$. Since $X$ is normal, there is an open cover $(W_{\alpha})_{\alpha\in\mathcal{A}}$ such that $\overline{W_{\alpha}}\subseteq U_{\alpha}$ for each $\alpha\in\mathcal{A}$ and $W_{\alpha}\neq\emptyset$ whenever $U_{\alpha}\neq\emptyset$. Since $X$ is ultranormal, for each $\alpha\in\mathcal{A}$, there is some clopen set $V_{\alpha}$ with $\overline{W_{\alpha}}\subseteq V_{\alpha}\subseteq U_{\alpha}$.

$2\rightarrow 3$. Now assume that $(U_{\alpha})_{\alpha\in\mathcal{A}}$ is a locally-finite open over of $X$. Let $(V_{\alpha})_{\alpha\in\mathcal{A}}$ be a clopen cover of $X$ such that $V_{\alpha}\subseteq U_{\alpha}$ for each $\alpha\in\mathcal{A}$. Well order the set $\mathcal{A}$. The family $(V_{\alpha})_{\alpha\in\mathcal{A}}$ is locally-finite, so since each $V_{\alpha}$ is closed, for each $\alpha\in\mathcal{A}$, the union $\bigcup_{\beta<\alpha}V_{\beta}$ is closed. Clearly $\bigcup_{\beta<\alpha}V_{\beta}$ is  open as well, so $\bigcup_{\beta<\alpha}V_{\beta}$ is clopen. Let
$P_{\alpha}=V_{\alpha}\setminus(\bigcup_{\beta<\alpha}V_{\beta})$. Then $(P_{\alpha})_{\alpha\in\mathcal{A}}$ is the required partition of $X$ into clopen sets.

$3\rightarrow 1,1\rightarrow 4$. This is fairly obvious.

$4\rightarrow 1$. This is a trivial consequence of Urysohn's lemma.
\end{proof}
\begin{thm} Let $X$ be a Hausdorff space. The following are equivalent.

\begin{enumerate}
\item $X$ is ultraparacompact.

\item $X$ is ultranormal and paracompact.

\item $X$ is strongly zero-dimensional and paracompact.

\item Every open cover of $X$ has a locally finite clopen refinement.

\item (Van Name) $X$ is zero-dimensional and satisfies the following property: let $I$ be an ideal on the Boolean algebra $\mathfrak{B}(X)$ of clopen subsets of $X$ such that $\bigcup I=X$ and if $P$ is a partition of $X$ into clopen sets, then $\bigcup(P\cap I)\in I$. Then $I=\mathfrak{B}(X)$.

\item (Van Name) $X$ is zero-dimensional and satisfies the following property: let $I$ be an ideal on $\mathfrak{B}(X)$ such that if
$P$ is a partition of $X$ into clopen sets, then $\bigcup(P\cap I)\in I$. Then there is an open set $O\subseteq X$ with
$I=\{R\in\mathfrak{B}(X)|R\subseteq O\}$.

\item $X$ has a compatible supercomplete non-Archimedean uniformity.
\end{enumerate}
\label{WERJ34qu}
\end{thm}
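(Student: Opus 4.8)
The plan is to prove the seven conditions equivalent in three stages: first that $1$, $2$, $3$, $4$ are equivalent using only the results already available; then that each of $5$ and $6$ is equivalent to $1$ by an ideal-theoretic argument carried out inside the Boolean algebra $\mathfrak{B}(X)$; and finally that $7$ is equivalent to $1$ by way of the theory of supercomplete non-Archimedean uniformities. I expect the last stage to be the main obstacle and would treat the first two as essentially bookkeeping on top of Theorem \ref{SK7y3arekl}.

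For the block $1\Leftrightarrow 2\Leftrightarrow 3\Leftrightarrow 4$ I would run $1\Rightarrow 2\Rightarrow 1$ together with $2\Leftrightarrow 3$ and $1\Leftrightarrow 4$. The implication $1\Rightarrow 2$ is immediate from the introductory remarks, and $2\Leftrightarrow 3$ is immediate from Theorem \ref{SK7y3arekl}, since a Hausdorff paracompact space is normal and that theorem identifies ultranormality with normality plus strong zero-dimensionality. The substantive step is $2\Rightarrow 1$: given an open cover, paracompactness yields a locally finite, hence point-finite, open refinement; clause $2$ of Theorem \ref{SK7y3arekl} shrinks it to a clopen cover $(V_\alpha)$ with $V_\alpha\subseteq U_\alpha$; and the disjointification device from the proof of $2\Rightarrow 3$ there — setting $P_\alpha=V_\alpha\setminus\bigcup_{\beta<\alpha}V_\beta$ after well ordering $\mathcal{A}$ and using local finiteness to see each $\bigcup_{\beta<\alpha}V_\beta$ is clopen — turns this into a clopen partition refining the original cover. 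The same disjointification gives $4\Rightarrow 1$, while $1\Rightarrow 4$ is trivial since a clopen partition is locally finite.

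For $1\Leftrightarrow 5\Leftrightarrow 6$ I would argue entirely in $\mathfrak{B}(X)$. For $1\Rightarrow 5$, an ideal $I$ with $\bigcup I=X$ is a clopen cover of the zero-dimensional space $X$; ultraparacompactness refines it to a clopen partition $P$, every block of which lies below some member of $I$ and so belongs to $I$ by downward closure, whence $P\subseteq I$ and the partition hypothesis forces $X=\bigcup(P\cap I)\in I$. For $5\Rightarrow 1$, given a clopen cover $\mathcal{C}$ refining a prescribed open cover, I would let $I$ be the set of clopen $A$ admitting a partition into clopen sets each contained in a member of $\mathcal{C}$; one checks $I$ is an ideal, that $\bigcup I=X$, and that $I$ is closed under partition-unions by concatenating the witnessing partitions of the blocks, so $5$ forces $X\in I$, i.e. $X$ has a clopen partition refining $\mathcal{C}$. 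For $1\Rightarrow 6$ put $O:=\bigcup I$: the inclusion $I\subseteq\{R:R\subseteq O\}$ is clear, and conversely a clopen $R\subseteq O$ is covered by $I$, so ultraparacompactness of the clopen (hence closed) subspace $R$ partitions it into blocks lying in $I$; extending this partition by $X\setminus R$ and applying the partition hypothesis places $R$ below an element of $I$, hence in $I$. Finally $6\Rightarrow 5$ is immediate, since if $\bigcup I=X$ the open set $O$ furnished by $6$ must be all of $X$, giving $I=\mathfrak{B}(X)$.

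The equivalence $1\Leftrightarrow 7$ is where the real work lies. For $1\Rightarrow 7$ I would equip $X$ with the non-Archimedean uniformity generated by the entourages $E_P=\bigcup_{R\in P}(R\times R)$, one for each clopen partition $P$; each $E_P$ is an equivalence relation, so the uniformity is non-Archimedean, and it is compatible because zero-dimensionality supplies, around any point of any neighborhood, a clopen partition whose block at that point sits inside the neighborhood. The content is supercompleteness, the non-Archimedean analogue of the classical fact that the fine uniformity of a paracompact space is supercomplete: one shows every Cauchy filter on the hyperspace $H(X)$ converges by using ultraparacompactness to refine the relevant uniform covers by clopen partitions and following the induced nested families of closed sets to their intersection. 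For $7\Rightarrow 1$ I would use two features of a supercomplete non-Archimedean uniformity — that its underlying topology is paracompact (a general property of supercomplete uniform spaces) and that the equivalence-relation entourages witness strong zero-dimensionality — and then invoke clause $3$ of the present theorem. The delicate point, and the reason Roy's space $\Delta$ (metric and zero-dimensional yet not ultraparacompact) is no counterexample, is that zero-dimensionality together with paracompactness does \emph{not} suffice; it is precisely supercompleteness of the non-Archimedean structure that upgrades zero-dimensionality to strong zero-dimensionality, so extracting clopen-partition refinements of arbitrary open covers from completeness of the hyperspace is the crux of this direction.
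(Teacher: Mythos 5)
The paper states this theorem without proof, so there is no argument of the author's to compare against; your proposal has to stand on its own. The portion covering conditions $1$--$6$ does: the loop $1\Rightarrow 2\Rightarrow 1$, $2\Leftrightarrow 3$, $1\Leftrightarrow 4$ via Theorem \ref{SK7y3arekl} and the well-ordered disjointification $P_{\alpha}=V_{\alpha}\setminus\bigcup_{\beta<\alpha}V_{\beta}$ is sound (local finiteness is correctly invoked to make the partial unions closed, and a clopen partition is indeed locally finite, so $1\Rightarrow 4$ is fine). The ideal-theoretic cycle $1\Rightarrow 6\Rightarrow 5\Rightarrow 1$ is also correct: the ideal of clopen sets admitting a clopen partition refining $\mathcal{C}$ is closed under the partition-union operation by concatenating witnesses, and in $1\Rightarrow 6$ the use of ultraparacompactness of the closed subspace $R$ together with the extension of the resulting partition by $X\setminus R$ correctly places $R$ below $\bigcup(P\cap I)\in I$.

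The gap is in $1\Leftrightarrow 7$, and you have essentially labelled it yourself as ``the crux'' without closing it. For $7\Rightarrow 1$ you propose to show that ``the equivalence-relation entourages witness strong zero-dimensionality,'' but the mere presence of a compatible non-Archimedean uniformity cannot witness this: Roy's space $\Delta$ is zero-dimensional and metrizable, hence admits compatible non-Archimedean uniformities, yet is not strongly zero-dimensional. So strong zero-dimensionality must be extracted from supercompleteness itself, and the standard way to do this does not pass through condition $3$ at all: given an open cover $\mathcal{O}$, for each equivalence entourage $E$ let $F_{E}$ be the union of the $E$-blocks not contained in any member of $\mathcal{O}$; these closed sets generate a Cauchy filter on $H(X)$, and if every $F_{E}$ were nonempty its limit in the complete hyperspace would produce a point of $X$ not covered by $\mathcal{O}$ -- hence some $F_{E}=\emptyset$ and $X/E$ is the desired clopen partition refining $\mathcal{O}$. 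This single argument gives $7\Rightarrow 1$ directly. Symmetrically, for $1\Rightarrow 7$ you assert supercompleteness of the partition uniformity $\{E_{P}\}$ but give no argument that Cauchy filters on $H(X)$ converge; this is the non-Archimedean analogue of Isbell's theorem that the fine uniformity of a paracompact space is supercomplete and needs an actual proof (or an explicit citation), not a one-sentence gesture. As written, the equivalence with $7$ is a plan rather than a proof.
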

A metric $d$ on a set $X$ is said to be an ultrametric if $d$ satisfies the strong triangle inequality: $d(x,z)\leq Max(d(x,y),d(y,z))$, and a metric space $(X,d)$ is said to be an ultrametric space if $d$ is an ultrametric.
\begin{thm}
Every ultrametric space is ultraparacompact.
\end{thm}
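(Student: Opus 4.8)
The plan is to exploit the two fundamental features of an ultrametric $d$: for each fixed $r>0$ the relation $x\sim_r y\iff d(x,y)<r$ is an equivalence relation, and every open ball is clopen. The first of these is exactly where the strong triangle inequality is used, and it forces the open balls of a fixed radius to form a partition of $X$ into clopen sets; moreover any two balls that meet are nested, and every point of a ball is a center of it. I would record these standard facts first, since they are the only special properties of ultrametrics that the argument needs.

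Given an open cover $\mathcal{U}$, I would work with the countable refining tower of partitions $\mathcal{P}_n$ ($n\geq 0$), where $\mathcal{P}_n$ consists of the balls of radius $2^{-n}$, so that $\mathcal{P}_{n+1}$ refines $\mathcal{P}_n$. Call a ball \emph{good} if it is contained in some member of $\mathcal{U}$, and for a point $x$ write $B_n(x)$ for the member of $\mathcal{P}_n$ containing $x$. Since $\mathcal{U}$ is an open cover there is an $r>0$ with $B(x,r)\subseteq U$ for some $U\in\mathcal{U}$, so $B_n(x)$ is good for all large $n$; and goodness is upward closed along the chain $B_0(x)\supseteq B_1(x)\supseteq\cdots$, because a subset of a good ball lies in the same member of $\mathcal{U}$. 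Hence there is a least index $n(x)$ with $B_{n(x)}(x)$ good, and I would take as the candidate refinement the family of all \emph{selected} balls, namely the maximal good dyadic balls $B_{n(x)}(x)$.

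The crux, and the step needing the most care, is to verify that the selected balls genuinely partition $X$: that $B_{n(x)}(x)$ depends on $x$ only through the ball it names, and that two selected balls coincide or are disjoint. Here nesting is decisive. If $z\in B_{n(x)}(x)$ then $d(x,z)<2^{-n(x)}\leq 2^{-m}$ for every $m\leq n(x)$, so $B_m(z)=B_m(x)$ for all such $m$; thus $B_{n(x)}(z)=B_{n(x)}(x)$ is good while each $B_m(z)$ with $m<n(x)$ is not, forcing $n(z)=n(x)$ and hence $B_{n(z)}(z)=B_{n(x)}(x)$. Therefore every point lies in exactly one selected ball. Since each selected ball is clopen and, being good, is contained in some member of $\mathcal{U}$, the family of selected balls is a partition of $X$ into clopen sets refining $\mathcal{U}$, which is precisely ultraparacompactness.

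As an alternative route one could instead finish through Theorem \ref{WERJ34qu}, obtaining paracompactness from Stone's theorem and strong zero-dimensionality from the same ball structure; but the direct construction above avoids that machinery and isolates the single place where the ultrametric inequality is doing the work.
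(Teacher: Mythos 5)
Your proof is correct, and it takes a genuinely different route from the paper. The paper never touches covers directly: it first proves that an ultrametric space is ultranormal, by showing that the standard Urysohn-type function $f(x)=d(C,x)/(d(C,x)+d(D,x))$ separating two disjoint closed sets is locally constant off $C\cup D$ (again via the strong triangle inequality), so that $f^{-1}[0,\tfrac{1}{2})$ is clopen; it then invokes A.~H.~Stone's theorem that every metric space is paracompact and concludes via the equivalence ``ultranormal $+$ paracompact $\Rightarrow$ ultraparacompact'' of Theorem \ref{WERJ34qu} --- exactly the alternative you mention and set aside in your last paragraph. Your argument instead builds the clopen refining partition directly: the relation $d(x,y)<2^{-n}$ is an equivalence relation, the resulting tower of ball-partitions $\mathcal{P}_n$ is refining, and selecting for each point the maximal ``good'' dyadic ball is well defined because membership in $B_{n(x)}(x)$ forces agreement of the whole chain $B_0,\dots,B_{n(x)}$ and hence of the selection index. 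The verification that the selected balls are pairwise equal or disjoint is exactly the point needing care, and you handle it correctly. What your approach buys is self-containment and extra information: it avoids both Stone's theorem and the machinery of Theorem \ref{WERJ34qu}, and it shows the refining partition can be taken to consist of balls of dyadic radii. What the paper's approach buys is reuse: the locally-constant-function argument isolates ultranormality as the only ultrametric-specific input and lets general theory do the rest.
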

\begin{proof}
It is well known that every metric space is paracompact. If $(X,d)$ is an ultrametric space and $R,S$ are disjoint closed subsets.
Let $f:X\rightarrow[0,1]$ be the function defined by $f(x)=\frac{d(C,x)}{d(C,x)+d(D,x)}$. Then $f$ is a continuous function where
$f=0$ on $C$, $f=1$ on $D$. Furthermore, if $x\not\in C\cup D$, then there is some $\epsilon>0$ with
$d(x,C)\geq\epsilon$ and $d(x,D)\geq\epsilon$. However, if $y\in B_{\epsilon}(x)$, then for each $c\in C$, we have
$d(y,c)\leq Max(d(x,y),d(x,c))=d(x,c)$. If $d(y,c)<d(x,c)$, then $d(x,c)\leq Max(d(y,c),d(x,y))<d(x,c)$, a contradiction, so
$d(y,c)=d(x,c)$. Therefore $d(y,C)=d(x,C)$ whenever $y\in B_{\epsilon}(x)$. Similarly, $d(y,C)=d(x,C)$ whenever
$y\in B_{\epsilon}(x)$. Therefore $f(y)=f(x)$ for $y\in B_{\epsilon}(x)$. In other words, the function $f$ is locally constant outside
$C\cup D$. We therefore conclude that $f^{-1}[0,\frac{1}{2})$ is a clopen set and clearly
$C\subseteq f^{-1}[0,\frac{1}{2})$ and $D\subseteq f^{-1}[\frac{1}{2},1]$. Therefore $X$ is ultranormal. Since
$X$ is ultranormal and paracompact, we conclude that $X$ is ultraparacompact.
\end{proof}

\begin{thm}
Let $X$ be an ultraparacompact space, and let $Y$ be a compact zero-dimensional space. Then $X\times Y$ is ultraparacompact
as well.
\label{EKJ379y4}
\end{thm}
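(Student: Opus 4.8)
The plan is to show directly that every open cover of $X\times Y$ is refined by a partition into clopen sets, combining a compactness (tube-lemma) argument in the $Y$-direction with the ultraparacompactness of $X$ in the $X$-direction. First I would reduce to a convenient form of cover. Since $X$ is ultraparacompact it is zero-dimensional, and $Y$ is zero-dimensional, so $X\times Y$ has a basis of clopen boxes $U\times V$ with $U$ clopen in $X$ and $V$ clopen in $Y$. Hence, given an arbitrary open cover $\mathcal{W}$ of $X\times Y$, it suffices to refine a cover consisting entirely of such clopen boxes, so I may assume every member of $\mathcal{W}$ is a clopen box.

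The heart of the argument is a local, fiberwise construction. Fix $x\in X$. The slice $\{x\}\times Y$ is homeomorphic to $Y$, hence compact, so finitely many boxes $U_1\times V_1,\dots,U_n\times V_n$ from $\mathcal{W}$ cover it, and after discarding boxes that miss the slice each $U_i$ contains $x$. Setting $U_x=\bigcap_{i=1}^n U_i$, a clopen neighborhood of $x$, the fact that the boxes cover the slice forces $\bigcup_{i} V_i=Y$; thus $\{U_x\times V_i\}_{i=1}^n$ is a finite clopen cover of $U_x\times Y$ with $U_x\times V_i\subseteq U_i\times V_i\in\mathcal{W}$. Because $Y$ is zero-dimensional I can disjointify the finite clopen cover $\{V_i\}$ of $Y$ by setting $V_i'=V_i\setminus\bigcup_{j<i}V_j$, obtaining a finite clopen partition of $Y$ refining $\{V_i\}$. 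Then $\{U_x\times V_i'\}_{i=1}^n$ is a finite partition of $U_x\times Y$ into clopen sets, each still contained in some member of $\mathcal{W}$.

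Finally I would glue these local partitions using ultraparacompactness of $X$. The sets $\{U_x\mid x\in X\}$ form a clopen cover of $X$, so by ultraparacompactness there is a partition $P$ of $X$ into clopen sets refining it; for each $R\in P$ I pick $x$ with $R\subseteq U_x$ and restrict the corresponding fiberwise partition, obtaining a finite clopen partition $\{R\times V_i'\mid i\}$ of $R\times Y$ whose members are contained in members of $\mathcal{W}$. The union of these families over all $R\in P$ is the desired refinement: the pieces are clopen in $X\times Y$ as products of clopen sets, they cover $X\times Y$, and they are pairwise disjoint because distinct members of $P$ are disjoint while for a fixed $R$ the sets $V_i'$ partition $Y$. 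I expect the main obstacle to be the fiberwise step, namely verifying that a single clopen neighborhood $U_x$ can be chosen so that $U_x\times Y$ admits a finite clopen partition subordinate to $\mathcal{W}$; this is where compactness of $Y$ and zero-dimensionality of both factors are essential, and it is the exact analogue of the tube lemma that makes the product of a paracompact space with a compact space paracompact.
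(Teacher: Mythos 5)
Your proposal is correct and follows essentially the same route as the paper's proof: cover each slice $\{x\}\times Y$ by finitely many boxes using compactness of $Y$, intersect the corresponding first factors to get a clopen neighborhood $U_{x}$, apply ultraparacompactness of $X$ to the cover $\{U_{x}\mid x\in X\}$, and refine fiberwise by a finite clopen partition of $Y$. Your write-up is in fact a bit more careful than the paper's at two points the paper leaves implicit, namely the initial reduction to clopen boxes and the explicit disjointification of the finite cover of $Y$.
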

\begin{proof}
The proof of this result uses standard compactness argument.
Let $\mathcal{U}=\{U_{\alpha}\times V_{\alpha}|\alpha\in\mathcal{A}\}$ be an open cover of $X\times Y$. Then for each
$x\in X$, there we have $Y$ be covered by $\{V_{\alpha}|x\in U_{\alpha}\}$. Therefore reduce this cover to a finite subcover
$V_{\alpha_{x,1}},...,V_{\alpha_{x,n_{x}}}$. Let $U_{x}=U_{\alpha_{x,1}},...,U_{\alpha_{x,n_{x}}}$ for each $x\in X$.
Then there is a partition $P$ of $X$ into clopen sets that refines $\{U_{x}|x\in X\}$. If $R\in P$, then there is some
$x_{R}$ with $R\subseteq U_{x_{R}}$. Furthermore, cover $V_{\alpha_{x,1}},...,V_{\alpha_{x,n_{x}}}$ is refinable by a partition
$P_{R}$ of $Y$ into clopen sets. It can easily be seen that $\bigcup_{R\in P}\{R\times S|S\in P_{R}\}$ is the required partition of
$X$ into clopen sets.
\end{proof}

The following well known characterization of paracompactness generalizes to ultraparacompactness.
\begin{thm}
Let $X$ be a completely regular space with compactification $C$. Then $X$ is paracompact if and only if $X\times C$ is normal
\end{thm}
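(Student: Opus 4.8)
The plan is to handle the two implications quite asymmetrically, since the forward direction is a routine consequence of standard product theorems whereas the converse is the substantive half (this is essentially Tamano's theorem, classically phrased with $C=\beta X$).

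For the forward direction I would not even aim for normality directly, but rather prove the stronger statement that $X\times C$ is paracompact. The key input is the standard fact that the product of a paracompact space with a compact Hausdorff space is paracompact. Since $C$ is a compactification it is compact Hausdorff, and $X$ is Hausdorff because it is completely regular, so $X\times C$ is a Hausdorff paracompact space, hence normal.

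For the converse I would run the diagonal argument. Given an open cover $\{U_\alpha\}_{\alpha}$ of $X$, I would extend each $U_\alpha$ to an open set $G_\alpha\subseteq C$ with $G_\alpha\cap X=U_\alpha$ and set $Z=C\setminus\bigcup_\alpha G_\alpha$. Then $Z$ is a compact subset of $C$ disjoint from $X$ (because the $U_\alpha$ already cover $X$). The diagonal $\Delta=\{(x,x):x\in X\}$ is a closed copy of $X$ in $X\times C$, and $X\times Z$ is closed and disjoint from $\Delta$. Applying normality of $X\times C$, I would separate $\Delta$ and $X\times Z$ by disjoint open sets, obtaining an open $\mathcal{O}\supseteq\Delta$ whose closure misses $X\times Z$. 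For each $x\in X$ the slice $\mathcal{O}_x=\{y\in C:(x,y)\in\mathcal{O}\}$ is then an open neighborhood of $x$ in $C$; since $(x,\overline{\mathcal{O}_x})\subseteq\overline{\mathcal{O}}$, its $C$-closure is compact and disjoint from $Z$, hence contained in $\bigcup_\alpha G_\alpha$ and therefore covered by finitely many $G_\alpha$. Intersecting with $X$ shows each $x$ has a neighborhood contained in a finite union of members of $\{U_\alpha\}_\alpha$.

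The main obstacle is the final step: converting this "pointwise finite" separation data into a genuine locally finite open refinement of $\{U_\alpha\}_\alpha$. The slices only guarantee that each neighborhood lies in a finite union of cover elements, which is a priori weaker than local finiteness, so I expect to need the standard characterization that a regular Hausdorff space is paracompact as soon as every open cover admits a locally finite open refinement, combined with a shrinking and star-refinement argument that promotes the neighborhood-of-the-diagonal structure (and the compactness of $C$) into such a refinement. This is precisely the delicate combinatorial core of Tamano's theorem, and it is here that I would expect to lean on the full strength of the normality of $X\times C$.
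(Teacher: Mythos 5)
The paper states this result (Tamano's theorem) without any proof, so there is no argument of the author's to measure yours against; I judge the proposal on its own terms. Your forward direction is complete and correct: $X\times C$ is the product of a paracompact Hausdorff space with a compact Hausdorff space, hence paracompact Hausdorff, hence normal. The converse, however, contains a genuine gap, which you have located but not closed. The conclusion you actually extract from the normality of $X\times C$ --- ``each $x$ has a neighborhood contained in a finite union of members of $\{U_\alpha\}_\alpha$'' --- is vacuous: for any open cover of any space, each point $x$ lies in some $U_{\alpha_0}$, which is itself an open neighborhood of $x$ contained in a one-element union of cover members. So at the point where you stop, the normality hypothesis has bought nothing, and the whole theorem remains to be proved. (The paper's own example of $\omega_1$ illustrates that this pointwise property cannot imply paracompactness: every point of $\omega_1$ has a neighborhood inside a single member of the cover $\{[0,\alpha):\alpha<\omega_1\}$, yet $\omega_1$ is not paracompact.)

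What the separation of $\Delta$ from $X\times Z$ actually buys is \emph{uniform} control over the slices: the closures $\overline{\mathcal{O}_x}$ all avoid the single compact set $Z$ simultaneously, and this uniformity is what must be exploited. The standard way to do so is to upgrade the separation to a continuous $f:X\times C\to[0,1]$ with $f=0$ on $\Delta$ and $f=1$ on $X\times Z$, and to introduce the continuous pseudometric $\rho(x,x')=\sup_{y\in C}\abs{f(x,y)-f(x',y)}$ on $X$ (continuity of $x\mapsto f(x,\cdot)$ in the sup norm uses compactness of $C$). One then checks that each ball $B_{\rho}(x,\tfrac{1}{2})$ lies in a finite union of the $U_\alpha$, because $\{y\in C: f(x,y)\leq\tfrac{1}{2}\}$ is compact and misses $Z$, hence is covered by finitely many $G_\alpha$; since the pseudometric quotient of $X$ is metrizable and therefore paracompact, the cover by these balls has a locally finite open refinement, and intersecting each member with its finitely many relevant $U_\alpha$ produces the desired locally finite open refinement of $\{U_\alpha\}_\alpha$. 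Some device of this kind --- a continuous pseudometric, or Michael's theorem that a regular space is paracompact once every open cover has a locally finite (not necessarily open) refinement --- is indispensable here; the phrase ``a shrinking and star-refinement argument'' does not supply it, and without it the converse direction is not established.
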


\begin{thm}
Let $X$ be a zero-dimensional space with zero-dimensional compactification $C$. Then $X$ is ultraparacompact if and only if
$X\times C$ is ultranormal.
\end{thm}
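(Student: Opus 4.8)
The plan is to reduce both implications to characterizations already in hand, so that the only genuinely new ingredient is a single embedding observation. For the forward direction, suppose $X$ is ultraparacompact. Since $C$ is a zero-dimensional compactification, it is in particular a compact zero-dimensional space, so Theorem~\ref{EKJ379y4} applies with $Y=C$ and shows that $X\times C$ is ultraparacompact. As every ultraparacompact space is ultranormal, $X\times C$ is ultranormal, and this half is finished with no further work.

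For the converse I would assume $X\times C$ is ultranormal and aim to verify the two conditions in item~(2) of Theorem~\ref{WERJ34qu}, namely that $X$ is ultranormal and paracompact, after which the implication $(2)\Rightarrow(1)$ of that theorem immediately gives that $X$ is ultraparacompact. Paracompactness of $X$ follows from the classical characterization stated just above: $X$ is a zero-dimensional Hausdorff (hence Tychonoff) space whose compactification is $C$, and since $X\times C$ is ultranormal it is in particular normal, so that theorem yields at once that $X$ is paracompact.

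The only step requiring an idea is extracting \emph{ultranormality} of $X$ from ultranormality of the product, and here I would exploit the fact that $X$ embeds as a \emph{closed} subspace of $X\times C$. Writing $e\colon X\to C$ for the dense embedding, the graph $\Gamma=\{(x,e(x)):x\in X\}$ is closed in $X\times C$ because $C$ is Hausdorff and $e$ is continuous, while the first-coordinate projection restricts to a homeomorphism of $\Gamma$ onto $X$. Since every closed subspace of an ultranormal space is ultranormal, $X\cong\Gamma$ is ultranormal. (Equivalently, one could note that a closed subspace of the strongly zero-dimensional normal space $X\times C$ is again strongly zero-dimensional and normal, hence ultranormal by Theorem~\ref{SK7y3arekl}; but the closed-subspace route is the most economical.) Combining ultranormality and paracompactness of $X$ then closes the argument.

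I expect the crux to be precisely the closed-graph observation, since this is what allows ultranormality to descend from $X\times C$ to $X$ — strong zero-dimensionality is not hereditary for arbitrary subspaces, so the closedness of $\Gamma$ is doing real work. The two points to verify carefully are that $\Gamma$ is genuinely closed (which uses only that $C$ is Hausdorff) and that the hypothesis of the cited paracompactness theorem — that $C$ be a compactification of $X$ — is met by our zero-dimensional compactification; everything else is bookkeeping against Theorems~\ref{SK7y3arekl}, \ref{WERJ34qu}, and \ref{EKJ379y4}.
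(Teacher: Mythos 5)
Your proposal is correct and follows essentially the same route as the paper: the forward direction via Theorem~\ref{EKJ379y4}, and the converse by deducing paracompactness of $X$ from normality of $X\times C$ and ultranormality of $X$ from ultranormality of $X\times C$, then invoking Theorem~\ref{WERJ34qu}. The paper leaves the step ``$X\times C$ ultranormal implies $X$ ultranormal'' unjustified, and your closed-graph argument correctly supplies it (though the closed slice $X\times\{c\}$ for any fixed $c\in C$ would give the same conclusion even more directly).
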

\begin{proof}
$\rightarrow$. This follows from Theorem \ref{EKJ379y4}

$\leftarrow$ If $X\times C$ is ultranormal, then $X\times C$ is normal, so $X$ is paracompact. Furthermore, since
$X\times C$ is ultranormal, the space $X$ is ultranormal as well. Therefore, by Theorem \ref{WERJ34qu}, the space
$X$ is ultraparacompact.
\end{proof}

The paper \cite{AM} gives several characterizations of when a $P_{\kappa}$-space (a space where the intersection of less than $\kappa$ many open sets is open) is ultraparacompact.
We define a \emph{$P_{\kappa}$-space} to be a completely regular space where the intersection of less than $\kappa$ many open sets is open. A \emph{$P$-space} is a complete regular space where the intersection of countably many open sets is open.
If $\kappa$ is a singular cardinal, then it is easy to show that every $P_{\kappa}$-space is a $P_{\kappa^{+}}$-space, so it suffices to restrict the study of $P_{\kappa}$-spaces to when $\kappa$ is a regular cardinal. We say that a collection $\mathcal{O}$ of subsets of a space $X$ is \emph{locally
$\kappa$-small} if each $x\in X$ has an open neighborhood $U$ such that $|\{O\in\mathcal{O}|O\cap U\neq\emptyset\}|<\kappa$.
\begin{thm}
Let $\kappa$ be an uncountable regular cardinal and let $X$ be a $P_{\kappa}$-space. Then the following are equivalent.
\begin{enumerate}
\item Every open cover is refinable by a locally $\kappa$-small open cover.

\item Every open cover is refinable by a locally $\kappa$-small cover consisting of sets that are not necessarily open.

\item Every open cover is refinable by a locally $\kappa$-small closed cover.

\item Every open cover is refinable by a locally $\kappa$-small clopen cover.

\item $X$ is paracompact.

\item $X$ is ultraparacompact.

\item Every open cover is refinable by an open cover that can be partitioned into at most $\kappa$ many locally $\kappa$-small families.
\end{enumerate}
\end{thm}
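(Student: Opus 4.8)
The plan rests on two structural facts about a $P_\kappa$-space $X$ with $\kappa$ uncountable and regular, together with two ``arithmetic'' features of $\kappa$ that I will use repeatedly: that the intersection of fewer than $\kappa$ open sets is open (the $P_\kappa$ property), and that a union of fewer than $\kappa$ sets each of size $<\kappa$ again has size $<\kappa$ (regularity of $\kappa$). First I would record that $X$ is \emph{strongly zero-dimensional}: since $\kappa>\aleph_0$, every zero set $f^{-1}[\{0\}]=\bigcap_n f^{-1}[[0,1/n)]$ is a countable intersection of open sets, hence open, hence clopen, so disjoint zero sets $Z_1,Z_2$ are separated by the clopen set $Z_1$ itself. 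Second I would prove the key lemma that in a $P_\kappa$-space a \emph{locally $\kappa$-small union of closed sets is closed}: given $x$ outside the union, pick an open $U\ni x$ meeting $<\kappa$ of the sets and, for each of those, an open neighbourhood of $x$ missing it; their common intersection is open by $P_\kappa$ and misses the whole union. Consequently a locally $\kappa$-small union of clopen sets is clopen, and closures commute with locally $\kappa$-small unions.

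With these in hand the easy arrows are immediate: $6\Rightarrow 4$ because a clopen partition is locally $\kappa$-small (each cell meets only itself); $4\Rightarrow 1$ and $4\Rightarrow 3$ since clopen sets are open and closed; $1\Rightarrow 2$, $3\Rightarrow 2$, and $1\Rightarrow 7$ trivially; and $6\Rightarrow 5$ is ultraparacompact $\Rightarrow$ paracompact. The converse $5\Rightarrow 6$ is then cheap: $X$ is paracompact and strongly zero-dimensional, so condition (3) of Theorem~\ref{WERJ34qu} gives ultraparacompactness. I would also prove $4\Rightarrow 6$ as a standalone transfinite disjointification: given a locally $\kappa$-small clopen refinement $\{C_\alpha\}_{\alpha<\lambda}$, well-order it and set $P_\alpha=C_\alpha\setminus\bigcup_{\beta<\alpha}C_\beta$; the subtracted union is a locally $\kappa$-small union of clopen sets, hence clopen by the key lemma, so each $P_\alpha$ is clopen and $\{P_\alpha\}$ is the desired clopen partition.

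It remains to close the loop through the ``weak'' conditions, and here are the three substantive arrows. For $2\Rightarrow 3$ I would use complete regularity to pass to an open cover whose members have closures refining $\mathcal U$, apply (2), and take closures: closures of a locally $\kappa$-small family are again locally $\kappa$-small, since an open set meeting $\overline R$ meets $R$. For $7\Rightarrow 2$ I would adapt Michael's ``$\sigma$-locally finite $\Rightarrow$ locally finite'' step: writing the open refinement as $\bigcup_{i<\kappa}\mathcal V_i$ with $W_i=\bigcup\mathcal V_i$, replace each $V\in\mathcal V_i$ by $V'=V\setminus\bigcup_{j<i}W_j$; at a point $x$ with least index $i_0$, the set $W_{i_0}\cap\bigcap_{i\le i_0}N_i$, where $N_i$ witnesses local $\kappa$-smallness of $\mathcal V_i$, is open by $P_\kappa$ and meets only $<\kappa$ of the $V'$, using regularity of $\kappa$ for the count. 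The crucial arrow is $3\Rightarrow 6$. Here I would first prove $3\Rightarrow 1$ by adapting Michael's ``closed locally finite $\Rightarrow$ open locally finite'' argument: from a closed locally $\kappa$-small refinement $\mathcal F$ and a closed locally $\kappa$-small refinement $\mathcal G$ of $\{V\text{ open}: V\text{ meets }<\kappa\text{ of }\mathcal F\}$, set $F^{*}=X\setminus\bigcup\{G\in\mathcal G: G\cap F=\emptyset\}$, which is open because the subtracted union is closed by the key lemma, and put $W_F=F^{*}\cap U_F$; one checks $F\subseteq W_F\subseteq U_F$ and, via regularity of $\kappa$, that $\{W_F\}$ is locally $\kappa$-small. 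From condition (1) I would then derive normality by the standard closure argument, now valid because closures commute with locally $\kappa$-small unions, so that $X$ is normal and strongly zero-dimensional, i.e.\ ultranormal by Theorem~\ref{SK7y3arekl}. Finally, since each $W_F\supseteq F$ has the \emph{same} index as $F$, ultranormality separates the disjoint closed sets $F$ and $X\setminus W_F$ by a clopen $C_F$ with $F\subseteq C_F\subseteq W_F$; the family $\{C_F\}$ is a locally $\kappa$-small clopen refinement (condition (4)), and $4\Rightarrow 6$ finishes.

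I expect the main obstacle to be precisely the clopenization inside $3\Rightarrow 6$. The naive approach---shrink the locally $\kappa$-small open cover to a closed cover and then thicken each piece to a clopen set by ultranormality---breaks down, because transfinite shrinking of a merely point-$(<\kappa)$ cover can fail to cover at limit stages, and because a one-set-at-a-time separation $F\subseteq C\subseteq U$ destroys local $\kappa$-smallness (the clopen hull may be far larger than $F$). The device that rescues it is the $F^{*}$ construction: it produces, with the same index set, an open \emph{locally $\kappa$-small} expansion $W_F$ tightly around each $F$, so that the subsequent clopen separation is forced to stay inside $W_F$ and hence remains locally $\kappa$-small. Throughout, the two levers are $P_\kappa$ (to keep the relevant test-neighbourhoods and the sets $F^{*}$ open) and the regularity of $\kappa$ (to keep all constructed families locally $\kappa$-small); it is their combination, rather than either alone, that collapses the whole list to paracompactness and ultraparacompactness.
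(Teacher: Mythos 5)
Your proposal is correct, but it cannot be compared line-by-line with the paper's argument for the simple reason that the paper gives none: its ``proof'' is the single citation ``See \cite{AM}.'' What you have produced is therefore a genuine self-contained proof where the paper defers entirely to Artico--Moresco. Your route is the natural transfinite adaptation of E.~Michael's characterizations of paracompactness, and it is organized around exactly the right two levers: the key lemma that in a $P_{\kappa}$-space a locally $\kappa$-small union of closed sets is closed (whence locally $\kappa$-small unions of clopen sets are clopen and closures commute with such unions), and the regularity of $\kappa$ to keep all the cardinality counts below $\kappa$. The implication cycle $6\Rightarrow4\Rightarrow1\Rightarrow7\Rightarrow2\Rightarrow3\Rightarrow6$ together with $5\Leftrightarrow6$ does cover all seven conditions; the Michael-style expansion $F^{*}=X\setminus\bigcup\{G\in\mathcal G:G\cap F=\emptyset\}$ is sound (openness of $F^{*}$ is exactly the key lemma, and the $(<\kappa)\times(<\kappa)$ count uses regularity correctly); and the observation that an uncountable-$\kappa$ $P_{\kappa}$-space has all zero sets clopen, hence is strongly zero-dimensional, is what lets you invoke Theorems~\ref{SK7y3arekl} and~\ref{WERJ34qu} for the ultranormality and ultraparacompactness steps. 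Two points you should make explicit in a written version: in $3\Rightarrow1$ you use that $\mathcal G$ is a \emph{cover}, so that every point of $N\cap W_{F}$ lies in some $G$ meeting $N$; and in $7\Rightarrow2$ the neighborhood $W_{i_{0}}\cap\bigcap_{i\le i_{0}}N_{i}$ is open only because $|i_{0}+1|<\kappa$, which is where $P_{\kappa}$ (rather than mere $P$-space-ness) is essential. What your approach buys over the paper's is transparency about where each hypothesis is used; what the citation buys is access to whatever additional equivalences and the singular-cardinal reductions appear in \cite{AM}.
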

\begin{proof}
See \cite{AM}.
\end{proof}
\begin{thm}
Let $G$ be a topological group whose underlying topology is an ultraparacompact $P$-space. Then every open cover of $G$ is refinable by a partition
of $G$ into cosets of open subgroups of $G$.
\end{thm}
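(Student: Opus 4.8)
The plan is to show first that in a $P$-space topological group the open subgroups form a neighborhood base at the identity, then to use ultraparacompactness to reduce to partitioning a single clopen set into cosets, and finally to carry out that partition by a transfinite recursion whose limit stages are controlled by the $P$-space property. First I would prove the \emph{Key Lemma}: every open neighborhood $U$ of the identity $e$ contains an open subgroup. Given $U$, use continuity of multiplication and inversion to build recursively a decreasing sequence of symmetric open neighborhoods $V_0\supseteq V_1\supseteq\cdots$ of $e$ with $V_0\subseteq U$ and $V_{n+1}V_{n+1}\subseteq V_n$. Put $H=\bigcap_n V_n$. Because $G$ is a $P$-space this countable intersection is open, and the conditions $V_{n+1}V_{n+1}\subseteq V_n$ together with $V_n=V_n^{-1}$ make $H$ a subgroup with $H\subseteq U$. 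Consequently the left cosets $gH$ of open subgroups form a base of clopen sets for $G$, so $G$ is zero-dimensional; equivalently, the left uniformity of $G$ is non-Archimedean, its ``balls'' $gH$ being exactly the cosets of open subgroups.

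Next I would reduce the problem using ultraparacompactness. Let $\mathcal{U}$ be an open cover. For each $x\in G$, choosing $U\in\mathcal{U}$ with $x\in U$ and applying the Key Lemma to the neighborhood $x^{-1}U$ of $e$ yields an open subgroup $H_x$ with $xH_x\subseteq U$; thus $\{xH_x\mid x\in G\}$ is a clopen cover refining $\mathcal{U}$ whose members are cosets. Since $G$ is ultraparacompact, this cover is refined by a partition $P$ of $G$ into clopen sets, and each $D\in P$ satisfies $D\subseteq xH_x\subseteq U$ for some $x$ and some $U\in\mathcal{U}$. Hence it suffices to partition each clopen piece $D$ into cosets of open subgroups all contained in $D$: the union over $D\in P$ of these partitions is then a partition of $G$ into cosets refining $\mathcal{U}$.

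To partition a fixed clopen set $D$, I would well-order $D$ and recurse. Having built a clopen set $D_\eta\subsetneq D$ that is a disjoint union of cosets contained in $D$, let $x_\eta$ be the least element of $D\setminus D_\eta$; since $D\setminus D_\eta$ is open I may, using the Key Lemma, choose a \emph{maximal} open subgroup $H_\eta$ with $x_\eta H_\eta\subseteq D\setminus D_\eta$ (one exists by Zorn's lemma, as the relevant open subgroups are closed under unions of chains), and set $D_{\eta+1}=D_\eta\cup x_\eta H_\eta$, again clopen. At a limit stage of countable cofinality, $D_\xi$ is a countable increasing union of clopen sets, hence clopen because $G$ is a $P$-space (a countable union of closed sets in a $P$-space is closed). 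Provided every $D_\xi$ stays clopen, the recursion never stalls and exhausts $D$, and the resulting family of cosets is then automatically locally finite: $D$ is closed, every point of $D$ lies in its own coset, which is an open set meeting no other member, while every point outside $D$ has the neighborhood $G\setminus D$ meeting no member at all.

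The main obstacle is the limit stages of \emph{uncountable} cofinality, where $D_\xi=\bigcup_{\eta<\xi}x_\eta H_\eta$ is an uncountable union and the $P$-space property no longer forces closedness; indeed an arbitrary disjoint family of clopen cosets in a $P$-group need \emph{not} be locally finite, so the choice rule is essential. This is precisely where the maximality of $H_\eta$ together with the least-element rule must be exploited: one must show that a point $z\in\overline{D_\xi}\setminus D_\xi$ cannot persist, using that every small neighborhood $zK$ meets some chosen coset $x_\eta H_\eta$ whose subgroup $H_\eta$ then fails to contain $K$, which should contradict the maximality of the coset that ought to have absorbed $z$ at an earlier stage. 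Making this contradiction precise---reconciling the interdependence between the choice of $K$, the stage it selects, and the shrinking remainder $D\setminus D_\eta$---is the crux of the argument; once $D_\xi$ is shown to be clopen at every limit, the recursion completes and, by the local finiteness noted above, produces the required partition of $G$ into cosets of open subgroups.
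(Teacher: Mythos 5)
Your Key Lemma and your reduction are both correct: in a $P$-group the set $H=\bigcap_n V_n$ with $V_{n+1}V_{n+1}\subseteq V_n=V_n^{-1}$ is an open subgroup, so the open subgroups form a neighborhood base at $e$, and ultraparacompactness does reduce the problem to partitioning a single clopen set $D$ into cosets. The gap is exactly where you locate it, and it is fatal to the transfinite recursion as designed: at a limit $\xi$ of uncountable cofinality nothing forces $D_\xi=\bigcup_{\eta<\xi}x_\eta H_\eta$ to be closed, and the maximality of $H_\eta$ does not rescue you. Concretely, if $z\in\overline{D_\xi}\setminus D_\xi$ and a basic neighborhood $zK$ meets $x_\eta H_\eta$ at a point $w$, then maximality of $H_\eta$ only tells you that the larger open subgroup $H'$ generated by $H_\eta\cup K$ satisfies $x_\eta H'\not\subseteq D\setminus D_\eta$; it does not tell you that $z$ (which does lie in $x_\eta H'=wH'$) should have been absorbed at stage $\eta$, because the obstruction to enlarging $H_\eta$ may come from points of $x_\eta H'$ having nothing to do with $z$. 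The ``least element'' rule only bounds the length of the recursion; it says nothing about limit stages. So the crux you flag is a genuine missing idea, not a verification you have merely deferred.

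The theorem does hold, but the workable argument replaces the transfinite recursion by an $\omega$-fold iteration of ultraparacompactness followed by a single countable intersection, so that the $P$-space hypothesis does all of the limit-stage work. Let $P_0$ be a clopen partition refining the given cover. Given the clopen partition $P_n$, use your Key Lemma to pick for each $y\in G$ an open subgroup $H^n_y$ with $yH^n_y\subseteq P_n(y)$ (the member of $P_n$ containing $y$), and let $P_{n+1}$ be a clopen partition refining the cover $\{yH^n_y\mid y\in G\}$; since $yH^n_y\subseteq P_n(y)$, the partition $P_{n+1}$ also refines $P_n$. For a fixed $x$ choose $y_n$ with $x\in P_{n+1}(x)\subseteq y_nH^n_{y_n}=xH^n_{y_n}$, and set $H(x)=\bigcap_n H^n_{y_n}$, an open subgroup because $G$ is a $P$-space. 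Then $C_x:=\bigcap_n P_n(x)$ is clopen, the sets $C_x$ form a partition of $G$ refining $P_0$, and $C_x=xH(x)$: the inclusion $C_x\subseteq xH(x)$ follows from $P_{n+1}(x)\subseteq xH^n_{y_n}$ for every $n$, while $xH(x)\subseteq xH^n_{y_n}=y_nH^n_{y_n}\subseteq P_n(y_n)=P_n(x)$ (the last equality because $x\in P_n(y_n)$ and $P_n$ is a partition) gives the reverse. This produces the required partition into cosets of open subgroups; I suggest you rebuild your third step along these lines rather than trying to repair the maximality argument.
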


% ----------------------------------------------------------------
\section{The Point-Free Context}
The remaining results on strong zero-dimensionality, ultranormality, and ultraparacompactness are all from my research.
The notions of ultranormality, zero-dimensionality, and ultraparacompactness make sense in a point-free context, and by a generalization of Stone duality, the notions of ultranormality, zero-dimensionality, and ultraparacompactness translate nicely to certain kinds of Boolean algebras with extra structure.

A \emph{frame} is a complete lattice that satisfies the following infinite distributivity law
\[x\wedge\bigvee_{i\in I}y_{i}=\bigvee_{i\in I}(x\wedge y_{i}).\]
If $X$ is a topological space, then the collection of all open subsets of $X$ forms a frame.
Frames generalize the notion of a topological space, and frames are the central object of study in point-free topology. For Hausdorff spaces, no information is lost simply by considering the lattice of open sets of the topological space. More specifically, if $X,Y$ are Hausdorff spaces and the lattices of open sets of $X$ and $Y$ respectively are isomorphic, then $X$ and $Y$ are themselves isomorphic. Furthermore, many notions and theorems from general topology can be generalized to the point-free context. Moreover, the notion of a frame is very interesting from a purely lattice theoretic perspective without even looking at the topological perspective. The reader is referred to the excellent new book \cite{P}
for more information on point-free topology.

For notation, if $X$ is a poset and $R,S\subseteq X$, then $R$ refines $S$ (written $R\preceq S$) if for each $r\in R$ there is some $s\in S$ with $r\leq s$. If $x\in X$, then define $\downarrow x:=\{y\in X|y\leq x\}$.

A \emph{Boolean admissibility system} is a pair $(B,\mathcal{A})$ such that $B$ is a Boolean algebra and $\mathcal{A}$ is a collection of subsets of $B$ with least upper bounds such that

i. $\mathcal{A}$ contains all finite subsets of $B$,

ii. if $R\in\mathcal{A},S\subseteq\downarrow\bigvee R,R\preceq S$, then $S\in\mathcal{A}$ as well,

iii. if $R\in\mathcal{A}$ and $R_{r}\in\mathcal{A},\bigvee R_{r}=r$ for $r\in R$, then
$\bigcup_{r\in R}R_{r}\in\mathcal{A}$,

iv. if $R\in\mathcal{A}$, then $\{a\wedge r|r\in R\}\in\mathcal{A}$ as well.

Intuitively, a Boolean admissibility system is Boolean algebra along with a notion of which least upper bounds are important and which least upper bounds are not important. 
\begin{exam}
If $B$ is a Boolean algebra, and $\mathcal{A}$ is the collection of all subsets of $B$ with least upper bounds, then $(B,\mathcal{A})$ is a Boolean admissibility system.
\end{exam}
\begin{exam}
If $A$ is a Boolean subalgebra of $B$, and $\mathcal{A}$ is the collection of all subsets of $R$ where the least upper bound $\bigvee^{B}R$ exists in $B$ and $\bigvee^{B}R\in A$. Then $(A,\mathcal{A})$ is a Boolean admissibility system.
\end{exam}
A Boolean admissibility system $(B,\mathcal{A})$ is said to be subcomplete if whenever $R,S\subseteq B$ and $R\cup S\in\mathcal{A}$ and $r\wedge s=\emptyset$ whenever $r\in R,s\in S$, then $R\in\mathcal{A}$ and $S\in\mathcal{A}$.
\begin{exam}
If $(B,\mathcal{A})$ is a Boolean admissibility system and $B$ is a complete Boolean algebra, then the Boolean admissibility system
$(B,\mathcal{A})$ is subcomplete.
\end{exam}

If $L$ is a frame, then an element $x\in L$ is said to be \emph{complemented} if there is an element $y$ such that $x\wedge y=0$ and $x\vee y=1$. The element $y$ is said to be the complement of $y$ and one can easily show that the element $y$ is unique. The notion of a complemented element is the point-free generalization of the notion of a clopen set. Let $\mathfrak{B}(L)$ denote the set of complemented elements in $L$. Then $\mathfrak{B}(L)$ is a sublattice of $L$. In fact, $\mathfrak{B}(L)$ is a Boolean lattice. A frame $L$ is said to be \emph{zero-dimensional} if $x=\bigvee\{y\in\mathfrak{B}(L)|y\leq x\}$ for each $x\in L$. 

A \emph{Boolean based frame} is a pair $(L,B)$ where $L$ is a frame and $B$ is a Boolean subalgebra of $\mathfrak{B}(L)$ such that $x=\bigvee\{b\in\mathfrak{B}(L)|b\leq x\}$ for each $x\in L$. Clearly every Boolean based frame is zero-dimensional.
A zero-dimensional frame $L$ is said to be \emph{ultranormal} if whenever $a\vee b=1$, there is a complemented element $c\in L$ such that $c\leq a$ and $c'\leq b$.
A \emph{cover} of a frame $L$ is a subset $C\subseteq L$ with $\bigvee C=1$. A \emph{partition} of a frame $L$ is a subset $p\subseteq L\setminus\{0\}$ with $\bigvee p=1$ and where $a\wedge b=0$ whenever $a,b\in p,a\neq b$.
A zero-dimensional frame $L$ is said to be \emph{ultraparacompact} if whenever $C$ is a cover of $L$ there is a partition $p$ that refines $C$.
It can easily be seen that a Hausdorff space $(X,\tau)$ is zero-dimensional, ultranormal, or ultraparacompact respectively if and only if
$\tau$ is a zero-dimensional, ultranormal, or ultraparacompact respectively frame.

\begin{exam}
Every complete Boolean algebra is an ultraparacompact frame.
\end{exam}

There is a duality between the category of Boolean admissibility systems and Boolean based frames. If $(B,\mathcal{A})$ is a Boolean admissibility system, then let $C_{\mathcal{A}}$ be the collection of all ideals $I\subseteq B$ such that if $R\in\mathcal{A}$ and $R\subseteq I$, then $\bigvee R\in I$ as well.

If $(B,\mathcal{A})$ is a Boolean admissibility system, then $\mathcal{V}(B,\mathcal{A}):=(C_{\mathcal{A}},\{\downarrow b|b\in B\})$ is a Boolean based frame. Similarly, if $(L,A)$ is a Boolean based frame, then $\mathcal{W}(L,A):=(A,\{R\subseteq A|\bigvee^{L}R\in A\})$ is a Boolean admissibility system. Furthermore, these correspondences give an equivalence between the category of Boolean admissibility systems and the category of Boolean based frames. Hence, we obtain a type of Stone-duality for zero-dimensional frames. If $(L,A)$ is a Boolean based frame, then $A=\mathfrak{B}(L)$ if and only if $\mathcal{W}(L,A)$ is subcomplete. Since $(L,\mathfrak{B}(L))$ is a Boolean based frame iff $L$ is a zero-dimensional frame, we conclude that the category of zero-dimensional frames is equivalent to the category of subcomplete Boolean admissibility systems.

\begin{thm}
If $(B,\mathcal{A})$ is a Boolean admissibility system, then $
\mathcal{V}(B,\mathcal{A})=(C_{\mathcal{A}},\{\downarrow b|b\in B\})$ is ultranormal with 
$\{\downarrow b|b\in B\}=\mathfrak{B}(C_{\mathcal{A}})$ if and only if whenever $I,J\in\mathcal{C}_{\mathcal{A}}$, then $\{a\vee b|a\in I,b\in J\}\in\mathcal{C}_{\mathcal{A}}$ as well.
\end{thm}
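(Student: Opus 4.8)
The plan is to prove both implications by translating frame operations in $C_{\mathcal{A}}$ into operations on ideals of $B$. First I would record the basic dictionary: in the frame $C_{\mathcal{A}}$ the meet of $I,J$ is their intersection, while the join $I\vee J$ is the smallest $\mathcal{A}$-closed ideal containing the ideal $I+J:=\{a\vee b\mid a\in I,\ b\in J\}$ generated by $I\cup J$ (this is indeed an ideal, since $c\le a\vee b$ gives $c=(c\wedge a)\vee(c\wedge b)$). Thus the right-hand condition ``$I+J\in C_{\mathcal{A}}$'' is exactly the statement that $I\vee J=I+J$ for all $I,J$, i.e.\ that the ideal-theoretic join is already $\mathcal{A}$-closed. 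I would also note once and for all that every principal ideal $\downarrow b$ is complemented, with frame-complement $\downarrow b'$ (the Boolean complement), so that $\{\downarrow b\mid b\in B\}\subseteq\mathfrak{B}(C_{\mathcal{A}})$ holds unconditionally; the content of the equality $\mathfrak{B}(C_{\mathcal{A}})=\{\downarrow b\}$ is the reverse inclusion.

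For the direction $(\Leftarrow)$, assume $I+J\in C_{\mathcal{A}}$ for all $I,J$. To obtain $\mathfrak{B}(C_{\mathcal{A}})=\{\downarrow b\}$, take a complemented $I$ with complement $J$; then $I\cap J=\{0\}$ and $I\vee J=I+J=B$, so $1=a\vee b$ with $a\in I$, $b\in J$. Since $a'\le b\in J$ gives $a'\in J$, any $x\in I$ satisfies $x\wedge a'\in I\cap J=\{0\}$, whence $x\le a$; this shows $I=\downarrow a$. For ultranormality, suppose $I\vee J=1$; the hypothesis gives $1\in I+J$, say $1=a_0\vee b_0$ with $a_0\in I,\ b_0\in J$. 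Then $c:=\downarrow a_0\le I$ is complemented with $c'=\downarrow a_0'$, and $a_0'\le b_0\in J$ forces $c'\le J$, as required.

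For the harder direction $(\Rightarrow)$, assume ultranormality together with $\mathfrak{B}(C_{\mathcal{A}})=\{\downarrow b\}$, fix $I,J\in C_{\mathcal{A}}$, and reduce to showing that every $t\in I\vee J$ lies in $I+J$ (the reverse inclusion is automatic). From $\downarrow t\le I\vee J$ and the frame distributive law I get $(\downarrow t\wedge I)\vee(\downarrow t\wedge J)=\downarrow t$, and adjoining $\downarrow t'$ yields
\[
(\downarrow t\wedge I)\ \vee\ \bigl((\downarrow t\wedge J)\vee\downarrow t'\bigr)=\downarrow t\vee\downarrow t'=1 .
\]
Ultranormality produces a complemented $c\le\downarrow t\wedge I$ with $c'\le(\downarrow t\wedge J)\vee\downarrow t'$, and the hypothesis $\mathfrak{B}(C_{\mathcal{A}})=\{\downarrow b\}$ lets me write $c=\downarrow d$, so that $d\in I$ and $d\le t$. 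Meeting the inequality $\downarrow d'\le(\downarrow t\wedge J)\vee\downarrow t'$ with $\downarrow t$ and using distributivity collapses the right-hand side to $\downarrow t\wedge J$, giving $d'\wedge t\in J$. Since $d\le t$, one has $t=d\vee(d'\wedge t)$, which exhibits $t\in I+J$.

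The main obstacle is this forward direction, and specifically the temptation to argue by passing to the open sublocale $\downarrow(\downarrow t)$: ultranormality is not inherited by open subspaces, so a naive localization fails. The device that circumvents this is to enlarge the second element by $\downarrow t'$ so as to convert the local relation $(\downarrow t\wedge I)\vee(\downarrow t\wedge J)=\downarrow t$ into a cover of the global top $1$, apply ultranormality in the whole frame, and only afterwards recover the localized decomposition by meeting back down with $\downarrow t$. The second essential point, also living in this direction, is that the complemented witness furnished by ultranormality must be taken to be \emph{principal}; this is exactly where the hypothesis $\mathfrak{B}(C_{\mathcal{A}})=\{\downarrow b\}$ is used, and it is why both halves of the left-hand condition are genuinely needed.
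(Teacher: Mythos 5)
The paper states this theorem without any proof, so there is no argument of the author's to compare yours against; judged on its own terms, your proof is correct and complete. The dictionary you set up is accurate (meets in $C_{\mathcal{A}}$ are intersections, $I\vee J$ is the $\mathcal{A}$-closure of the ideal $I+J$, and each $\downarrow b$ is complemented with complement $\downarrow b'$), the backward direction correctly extracts a principal witness from $1=a\vee b\in I+J$, and the forward direction's key device --- padding $\downarrow t\wedge J$ with $\downarrow t'$ to promote the local decomposition of $\downarrow t$ to a cover of $1$, invoking ultranormality globally, using $\mathfrak{B}(C_{\mathcal{A}})=\{\downarrow b\mid b\in B\}$ to make the complemented witness principal, and meeting back down with $\downarrow t$ to obtain $t=d\vee(d'\wedge t)$ --- is sound at every step (uniqueness of complements in a distributive lattice justifies $c'=\downarrow d'$, and only the binary distributive law is used). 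You also correctly isolate where each of the two hypotheses on the left-hand side is consumed, which substantiates the ``only if'' direction as stated.
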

Hence, ultranormality simply means that the join of finitely many ideals in $\mathcal{C}_{\mathcal{A}}$ is an ideal in $\mathcal{C}_{\mathcal{A}}$.

\begin{thm}
If $(B,\mathcal{A})$ is a subcomplete Boolean admissibility system, then $\mathcal{V}(B,\mathcal{A})$ is ultraparacompact if and only if whenever $R\in\mathcal{A}$ there is some $S\in\mathcal{A}$ with $S\preceq R,\bigvee S=\bigvee R$ and $a\wedge b=0$ whenever $a,b\in S$ and $a\neq b$.
\end{thm}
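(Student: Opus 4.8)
The plan is to prove both directions by translating the frame-theoretic data (covers, partitions, refinements) into the language of the admissibility system $(B,\mathcal{A})$ through the duality $\mathcal{V},\mathcal{W}$ established above. The fundamental dictionary entry is this: since $\mathcal{W}(\mathcal{V}(B,\mathcal{A}))\cong(B,\mathcal{A})$ under the assignment $b\mapsto\downarrow b$, a subset $R\subseteq B$ lies in $\mathcal{A}$ if and only if the frame join $\bigvee^{C_{\mathcal{A}}}\{\downarrow r\mid r\in R\}$ is a principal ideal, in which case that join equals $\downarrow\bigvee R$. First I would record the consequences: a family $\{\downarrow b_{j}\}$ of complemented elements covers $C_{\mathcal{A}}$ exactly when $\{b_{j}\}\in\mathcal{A}$ with $\bigvee_{j}b_{j}=1$, and a partition of $C_{\mathcal{A}}$ by such elements corresponds to a family $S\in\mathcal{A}$ with $\bigvee S=1$ that is pairwise disjoint (meaning $a\wedge b=0$ for distinct $a,b\in S$); refinement of complemented families is just the relation $\preceq$ on $B$.

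Two preliminary observations complete this dictionary. First, every element of a frame partition is automatically complemented, since for $a\in p$ the element $\bigvee(p\setminus\{a\})$ meets $a$ in $0$ by the frame distributivity law and joins with it to $1$; thus I need not assume a partition consists of clopen elements. Second, here is the one place I expect to lean on subcompleteness: it guarantees $\mathfrak{B}(C_{\mathcal{A}})=\{\downarrow b\mid b\in B\}$, so that those automatically-complemented partition elements are in fact principal ideals $\downarrow c$ with $c\in B$, allowing the passage back into $B$. Finally, since $(C_{\mathcal{A}},\{\downarrow b\})$ is a Boolean based frame, every cover is refinable by a cover of complemented elements, so it suffices throughout to treat complemented covers.

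For the direction $(\Rightarrow)$, given $R\in\mathcal{A}$ with $b=\bigvee R$, I would augment it to a cover of the whole frame by adjoining the complement: the family $\{\downarrow r\mid r\in R\}\cup\{\downarrow b'\}$ has join $\downarrow b\vee\downarrow b'=1$, so ultraparacompactness supplies a refining partition $\{\downarrow c_{k}\}$, whose index set $\{c_{k}\}$ lies in $\mathcal{A}$. I then meet with $b$: axiom (iv) gives $\{b\wedge c_{k}\}\in\mathcal{A}$, and discarding the zero terms preserves admissibility by axiom (ii), producing $S=\{b\wedge c_{k}\mid b\wedge c_{k}\neq0\}$. This $S$ is pairwise disjoint, satisfies $\bigvee S=b\wedge\bigvee c_{k}=b=\bigvee R$, and refines $R$, because each $c_{k}$ lies below some $r\in R$ or below $b'$, and the latter forces $b\wedge c_{k}=0$, so every surviving term sits below a member of $R$.

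For $(\Leftarrow)$, I would take an arbitrary cover of $C_{\mathcal{A}}$, refine it to a complemented cover $\{\downarrow b_{j}\}$, read off $R=\{b_{j}\}\in\mathcal{A}$ with $\bigvee R=1$, and apply the hypothesis to obtain $S\in\mathcal{A}$ with $S\preceq R$, $\bigvee S=1$, and pairwise disjoint elements. Then $\{\downarrow s\mid s\in S\setminus\{0\}\}$ is a partition of $C_{\mathcal{A}}$ (its join is $\downarrow\bigvee S=\downarrow1$ by the dictionary, and it is pairwise disjoint since $\downarrow s\wedge\downarrow t=\downarrow(s\wedge t)$) that refines the original cover via $\preceq$. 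The main obstacle is assembling the dictionary rigorously, and in particular pinning down that frame partitions are precisely the pairwise-disjoint admissible families with top join; this is where the automatic complementedness of partition elements and the subcompleteness-driven identification $\mathfrak{B}(C_{\mathcal{A}})=\{\downarrow b\}$ must both be invoked. Once that correspondence is in hand, each direction reduces to a short computation with the admissibility axioms.
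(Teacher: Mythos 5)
Your proof is correct, but there is nothing in the paper to compare it against: the paper states this theorem with no proof at all, following it only with an interpretive remark. Your argument is a legitimate way to fill that gap. The dictionary you set up is exactly the right mechanism: the identification $R\in\mathcal{A}\Leftrightarrow\bigvee^{C_{\mathcal{A}}}\{\downarrow r\mid r\in R\}$ is principal (equal to $\downarrow\bigvee R$) is precisely the content of the asserted isomorphism $\mathcal{W}(\mathcal{V}(B,\mathcal{A}))\cong(B,\mathcal{A})$, and your two preliminary observations --- that partition elements of a frame are automatically complemented via $\bigvee(p\setminus\{a\})$ and the infinite distributive law, and that subcompleteness forces $\mathfrak{B}(C_{\mathcal{A}})=\{\downarrow b\mid b\in B\}$ --- are exactly what is needed to pull a frame partition back into $B$, and they correctly isolate the one place the subcompleteness hypothesis is used. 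The individual computations all check out: in the forward direction, adjoining $\downarrow b'$ to get a genuine cover, extracting $\{c_{k}\}\in\mathcal{A}$ from the refining partition, and passing to $\{b\wedge c_{k}\}$ via axiom (iv) and then discarding zeros via axiom (ii) (with the degenerate case $S=\emptyset$ covered by axiom (i)) all work, and the identity $\bigvee_{k}(b\wedge c_{k})=b\wedge\bigvee_{k}c_{k}$ is the standard distributivity fact valid in any Boolean algebra when $\bigvee_{k}c_{k}$ exists. The converse direction is the routine translation you describe. The only caveat worth flagging is that the ``only if'' half of your fundamental dictionary entry (principal join implies membership in $\mathcal{A}$) is not something you verify; you inherit it from the paper's unproved claim that $\mathcal{V}$ and $\mathcal{W}$ form a categorical equivalence. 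Since the theorem is stated downstream of that claim, relying on it is fair, but a fully self-contained proof would need to establish it.
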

Hence, the ultraparacompactness of frames translates into a version of ultraparacompactness in the dual subcomplete Boolean admissibility systems.

A \emph{partition} of a Boolean algebra $B$ is a subset $p\subseteq B\setminus\{0\}$ with $\bigvee p=1$ such that if
$a,b\in p,a\neq b$, then $a\wedge b=0$. The collection of partitions of a Boolean algebra forms a meet-semilattice under the refinement ordering
$\preceq$ with $p\wedge q=\{a\wedge b|a\in p,b\in q\}\setminus\{0\}$.
A \emph{Boolean partition algebra} is a pair $(B,F)$ where $B$ is a Boolean algebra and $F$ is a filter on the meet-semilattice of partitions of
$B$ such that $B=\{0\}\cup\bigcup F$. If $(A,F),(B,G)$ are Boolean partition algebras, then a \emph{partition homomorphism} from
$(A,F)$ to $(B,G)$ is a Boolean algebra homomorphism $\phi:(A,F)\rightarrow(B,G)$ such that $\phi[p]^{+}\in G$ whenever
$p\in F$.

A partition $p$ of a Boolean algebra $B$ is said to be \emph{subcomplete} if whenever $R\subseteq p$, the least upper bound
$\bigvee R$ exists. A Boolean partition algebra is said to be \emph{subcomplete} if each $p\in F$ is subcomplete.
A Boolean partition algebra $(B,F)$ is said to be \emph{locally refinable} if whenever
$p\in F,|p|>1$ and $p_{a}$ is a partition of $a$ with $p_{a}\cup\{a'\}\in F$ for $a\in p$, then $\bigcup_{a\in p}p_{a}\in F$ as well.

\begin{exam}
If $B$ is a Boolean algebra and $F$ is a filter on the lattice of partitions of $B$, then $\{0\}\cup\bigcup F$ is a Boolean algebra.
Furthermore, $(\{0\}\cup\bigcup F,F)$ is a Boolean partition algebra. If each $p\in F$ is a subcomplete partition of $B$, then
$(\{0\}\cup\bigcup F,F)$ is a subcomplete Boolean partition algebra.
\end{exam}

\begin{thm}
The category of subcomplete locally refinable Boolean partition algebras is equivalent to the category of ultraparacompact frames.
\end{thm}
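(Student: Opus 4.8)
I need to establish an equivalence of categories between subcomplete locally refinable Boolean partition algebras and ultraparacompact frames. Since the excerpt has already set up (i) a duality between Boolean admissibility systems and Boolean based frames via the functors $\mathcal{V}$ and $\mathcal{W}$, and (ii) a characterization of which subcomplete Boolean admissibility systems correspond to ultraparacompact frames, my strategy is to factor the desired equivalence through this existing machinery rather than build it from scratch. The key observation is that a Boolean partition algebra $(B,F)$ carries essentially the same data as a Boolean admissibility system, with the filter $F$ of partitions encoding which joins are ``important.'' So the heart of the proof is to construct a translation between the partition-algebra data and the admissibility-system data, and then check that the subcompleteness and local refinability conditions match up exactly with subcompleteness of the admissibility system together with the ultraparacompactness criterion of the theorem above.

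**Constructing the functors.** Given a subcomplete locally refinable Boolean partition algebra $(B,F)$, I would define an associated admissibility system by letting $\mathcal{A}$ be the collection of all subsets $R\subseteq B$ that refine (in the sense $R\preceq p$) some partition $p\in F$ after restricting to $\downarrow\bigvee R$ — more precisely, the ``piecewise'' sets generated by the partitions in $F$ via axioms (i)--(iv). The filter condition on $F$ should give axiom (i) (finite subsets) and the meet-stability $p\wedge q\in F$; the local refinability condition should supply the substitution axiom (iii); and axiom (iv), closure under meeting with a fixed element, should follow from the fact that $\{a\wedge r\mid r\in p\}$ refines a partition obtainable from $p$. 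Subcompleteness of each $p\in F$ gives that the relevant least upper bounds exist, which is what makes $(B,\mathcal{A})$ a \emph{subcomplete} admissibility system. In the reverse direction, from a subcomplete admissibility system I recover a partition algebra by taking $F$ to be the set of those $R\in\mathcal{A}$ that are genuine partitions ($a\wedge b=0$ for distinct $a,b\in R$ and $\bigvee R=1$); the subcompleteness criterion from the theorem above (every $R\in\mathcal{A}$ refines to a \emph{disjoint} $S\in\mathcal{A}$ with the same join) is precisely what guarantees that $F$ is nonempty, directed, and cofinal enough to reconstruct $\mathcal{A}$.

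**Matching the conditions.** Once the functors are in place, composing with $\mathcal{V}$ sends a subcomplete locally refinable partition algebra to $\mathcal{V}(B,\mathcal{A})=(C_{\mathcal{A}},\{\downarrow b\mid b\in B\})$, and I would invoke the theorem above to conclude that this frame is ultraparacompact exactly when the refinement-by-disjoint-sets property holds. The content to verify is that \emph{local refinability} of $(B,F)$ is equivalent to the statement ``every $R\in\mathcal{A}$ admits a disjoint $S\in\mathcal{A}$ with $S\preceq R$ and $\bigvee S=\bigvee R$.'' Intuitively these say the same thing — local refinability lets me glue together partitions of the pieces $a\in p$ into a single finer partition, which is the combinatorial core of producing a disjoint refinement — so this is where I expect the real work to live. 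Conversely, I must check that every ultraparacompact frame arises this way: given such a frame $L$, apply $\mathcal{W}$ to the Boolean based frame $(L,\mathfrak{B}(L))$ to get a subcomplete admissibility system, extract its partition algebra, and verify it is locally refinable using the ultraparacompactness of $L$ (partitions refining any given cover).

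**The main obstacle.** The functorial heart is routine once the dictionary is fixed, but the crux — and the step I expect to be most delicate — is proving that local refinability corresponds \emph{precisely} to the disjoint-refinement condition, and checking that the two functors are mutually inverse up to natural isomorphism. In particular I must confirm that passing from $F$ to $\mathcal{A}$ and back recovers exactly the original filter of partitions (no spurious partitions are added and none are lost), which requires that $F$ be determined by its generated admissibility system. The subcompleteness hypotheses are what make this clean, since they ensure the needed suprema exist and that disjoint decompositions can be separated (as in the definition of subcomplete admissibility system). I would therefore spend the bulk of the argument verifying these two round-trip identities and the equivalence of the two combinatorial conditions, after which the equivalence of categories with ultraparacompact frames follows by composing with the already-established Stone-type duality and the ultraparacompactness characterization.
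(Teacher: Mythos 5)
The paper itself offers no proof of this theorem --- it is stated as the closing result of the point-free section with no argument attached --- so there is nothing in the source to compare your route against. Judged on its own terms, your proposal identifies a plausible and natural strategy (translate a Boolean partition algebra $(B,F)$ into a Boolean admissibility system $(B,\mathcal{A})$ with $\mathcal{A}$ generated by the partitions in $F$, then compose with the $\mathcal{V}$/$\mathcal{W}$ duality and the ultraparacompactness criterion for $\mathcal{V}(B,\mathcal{A})$), but it is a plan rather than a proof: every step that carries actual mathematical weight is explicitly deferred. Concretely, three things are missing. First, the equivalence between local refinability of $(B,F)$ and the condition ``every $R\in\mathcal{A}$ admits a disjoint $S\in\mathcal{A}$ with $S\preceq R$ and $\bigvee S=\bigvee R$'' is the entire content of the theorem on the object level, and you say only that you ``expect the real work to live'' there; since $\mathcal{A}$ is defined as a closure of $F$ under axioms (i)--(iv), verifying the disjoint-refinement property for an arbitrary $R\in\mathcal{A}$ requires an induction over that closure (or an explicit description of $\mathcal{A}$), which you do not supply. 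Note also that you at one point mislabel the disjoint-refinement criterion as ``the subcompleteness criterion''; subcompleteness of an admissibility system is the separate splitting condition on $R\cup S$ with $r\wedge s=0$, and keeping these two conditions distinct matters for the argument.

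Second, an equivalence of categories is a statement about morphisms as much as objects, and your proposal never engages with them: you must check that partition homomorphisms $\phi$ with $\phi[p]^{+}\in G$ correspond exactly (fully and faithfully) to the appropriate morphisms of Boolean admissibility systems and hence of frames, and this is not automatic from the object-level dictionary. Third, the round-trip issue you flag --- whether $F$ is recovered from its generated $\mathcal{A}$ --- is a genuine obstruction, not a formality. Passing from an ultraparacompact frame $L$ back to a partition algebra forces a canonical choice of filter (presumably all subcomplete partitions of $1$ by elements of $\mathfrak{B}(L)$), and an arbitrary subcomplete locally refinable $(B,F)$ need not have $F$ equal to the filter so recovered; you must either prove that local refinability plus subcompleteness forces $F$ to be saturated in the required sense, or weaken the round-trip to a natural isomorphism in the category of partition algebras and prove that instead. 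Until these three points are carried out, the proposal is a credible outline of where a proof would go, not a proof.
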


\end{document}